\newtheorem{theorem}{Theorem}[section]
\newtheorem{lemma}[theorem]{Lemma}
\theoremstyle{definition}
\newtheorem{definition}[theorem]{Definition}
\newtheorem{notation}[theorem]{Notation}
\theoremstyle{remark}
\newtheorem{remark}[theorem]{Remark}
\newcommand{\bd}{\partial}
\newcommand{\rno}{\mathbb{R}^{n+1}}
\newcommand{\rtw}{\mathbb{R}^2}
\newcommand{\rth}{\mathbb{R}^3}
\newcommand{\cv}{\text{Conv}}
\newcommand{\cL}{\mathcal{L}}
\newcommand{\cA}{\mathcal{A}}
\newcommand{\cR}{\mathcal{R}}
\title{Translating solutions to the Gauss curvature flow with flat sides}
\author{Kyeongsu Choi}
\address{ {\bf Kyeongsu Choi:} Department of Mathematics, Massachusetts Institute of Technology, 77 Massachusetts Ave,  \newline \hphantom \quad\, Cambridge, MA 02139, USA.}
\email{choiks@mit.edu}
\author{Panagiota Daskalopoulos}
\address{ {\bf P. Daskalopoulos:} Department of Mathematics, Columbia University, 2990 Broadway, New York, NY 10027, USA.}
\email{pdaskalo@math.columbia.edu}
\author{Kiahm Lee}
\address{ {\bf Ki-Ahm Lee:} Department of Mathematical Sciences, Seoul National University, Seoul 151-747, Korea, 
\&  Korea\newline \hphantom \quad\,   Institute for Advanced Study, Seoul 130-722, Korea.}
\email{kiahm@snu.ac.kr}
\begin{document}

\maketitle

\begin{abstract}
We derive local $C^{2}$ estimates for complete non-compact  translating solitons of  the Gauss curvature flow in $\rth$ which are graphs 
over a convex domain  $\Omega$.  This is closely  is related to deriving local $C^{1,1}$ estimates for the degenerate Monge-Amp\'ere equation. 
As a result, given a weakly convex bounded domain $\Omega$,  we establish the existence of a $C^{1,1}_{\text{loc}}$ translating soliton. In particular, when the boundary $\partial \Omega$ has a line segment, we show the existence of flat sides of the translator from  a local   a'priori  non-degeneracy estimate near the free-boundary.   

\end{abstract}

\section{introduction}

We recall that  an one-parameter family of immersions $F:M^n \times (0,T) \to \rno$ is a solution of the Gauss curvature flow, if for 
each $t \in (0,T)$, $F(M^n,t)=\Sigma_t$ is a complete convex hypersurface embedded in $\rno$  satisfying
\begin{equation*}\label{eq:INT GCF}
\frac{\bd}{\bd t}  F(p,t)= K(p,t) \,  \vec{n}(p,t)  \tag{1.1}
\end{equation*} 
where $K(p,t)$ and $\vec{n}(p,t)$ are the Gauss curvature and the interior unit vector of $\Sigma_t$ at the point $F(p,t)$, respectively. 

In this paper, we consider a translating solution $\Sigma_t$ to the Gauss curvature flow in $\rth$ satisfying
\begin{align*}
\Sigma_t=\Sigma+ct\, \vec{e}_3 \eqqcolon \{Y + c t\, \vec{e}_3 \in \rth  : Y \in \Sigma \},
\end{align*}
where $ \vec{e}_3 =(0,0,1)$ and	 the {\em speed} $c$ is a constant, and $\Sigma$ is a complete convex hypersurface embedded in $\rth$. We observe that there exist a convex open set $\Omega \subset \rtw$ and a convex function $u:\Omega \to \mathbb{R}$ satisfying
\begin{center}
\begin{large}
\textit{ $\Sigma$ is the boundary of\, $\{(x,t):x\in \Omega, t \geq u(x)\}$.}
\end{large}
\end{center}
By the result in \cite{Urbas98}, the set $\Omega$ must be {\em  bounded }. 
If  $\cA(\Omega)$ denotes the area of $\Omega$, it follows that   $u$ is a smooth function satisfying
\begin{equation}\label{eq:INT GCFeq}
\begin{cases}
&\displaystyle \frac{\det  D^2 u}{(1+|Du|^2)^{\frac{3}{2}} } =\frac{2\pi}{\cA(\Omega)} \quad \text{in} \; \Omega, \\
& \displaystyle \lim_{x \to \bd\Omega} |Du|(x)  =  +\infty \quad \text{on}  \;\bd\Omega.
\end{cases}\tag{1.2}
\end{equation}

Conversely, given an open bounded convex set $\Omega \subset \rtw$, there exists a solution $u: \Omega \to \mathbb{R}$ of \eqref{eq:INT GCFeq}, and any two solutions differ by a constant. (See \cite{Urbas98} and Theorem 4.8 in \cite{Urbas88}).  Hence, given a translator $\Sigma$ in $\rth$ of the Gauss curvature flow, there exists an open  bounded  convex set $\Omega \in \rtw$ such that $\Sigma$ converges to the cylinder $\bd\Omega \times \mathbb{R}$, and the immersion $F:M^2 \to \rth$ of $F(M^2)=\Sigma$ satisfies
\begin{equation*}\label{eq:INT GCF}
K(p)= \frac{2\pi}{\cA(\Omega)}\,\langle \vec{n}(p),\vec{e}_3 \;\rangle. \tag{*}
\end{equation*}

We recall the result of John Urbas in  \cite{Urbas98}.

\begin{theorem}[Urbas]\label{thm:INT Urbas}
Given an open bounded convex  domain $\Omega \subset \rtw$, there exists a convex solution $u :\Omega \to \mathbb{R}$  satisfying \eqref{eq:INT GCFeq}, and it is unique up to addition by a constant. In particular, if for each $x_0\in \bd \Omega$, there exists a ball $B \subset \rtw$ satisfying $\Omega \subset B$ and $x_0 \in \bd B$, then the solution $u$ is a smooth function satisfying
\begin{equation*}
\lim_{x\to \bd \Omega}u(x)=+\infty.
\end{equation*}
\end{theorem}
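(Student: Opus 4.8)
The plan is to prove this in two stages, corresponding to the two assertions of the theorem: first existence and uniqueness of a convex solution to \eqref{eq:INT GCFeq}, and second the regularity and boundary behavior under the interior ball condition. I would first set up the problem as an approximation scheme. For a bounded convex domain $\Omega$, approximate from inside by smooth uniformly convex domains $\Omega_j \nearrow \Omega$, and on each $\Omega_j$ solve the Dirichlet problem
\begin{equation*}
\frac{\det D^2 u_j}{(1+|Du_j|^2)^{3/2}} = \frac{2\pi}{\cA(\Omega)} \quad \text{in } \Omega_j, \qquad u_j = j \quad \text{on } \bd \Omega_j,
\end{equation*}
where the constant right-hand side is fixed (not $2\pi/\cA(\Omega_j)$) so that the limiting equation has the right constant. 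Classical results on the Dirichlet problem for the prescribed Gauss curvature equation (Monge--Amp\`ere type) give a smooth uniformly convex solution $u_j$, provided a compatibility condition relating the total curvature to the geometry of $\Omega_j$ holds; here is where the choice of boundary data matters, and one may need to let the boundary value tend to infinity to make the subsolution barrier work. Normalize, say, by fixing the value at a common interior point $x_0 \in \Omega_1$, i.e. replace $u_j$ by $u_j - u_j(x_0)$.

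The core of the existence argument is then a stable interior estimate independent of $j$. Since each $u_j$ is convex with a controlled value at $x_0$, and since the graphs are pieces of convex hypersurfaces of bounded total Gauss curvature $2\pi$ (by the Gauss--Bonnet-type normalization built into the constant), one obtains locally uniform $C^0$ and $C^{0,1}$ bounds on compact subsets of $\Omega$; the key point is that the graph over any compact $K \Subset \Omega$ stays in a fixed compact region, which follows from convexity and the fact that the total curvature is exactly $2\pi$, preventing the graphs from escaping to infinity over interior points. Interior $C^2$ and higher estimates for $\det D^2 u = f(Du) > 0$ then come from Pogorelov-type and Evans--Krylov estimates (the equation is uniformly elliptic once $|Du|$ is bounded on $K$). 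Passing to a subsequential limit $u = \lim u_j$ gives a smooth convex solution on $\Omega$. For uniqueness up to constants: if $u, v$ both solve \eqref{eq:INT GCFeq}, consider $w = u - v$; the difference of the two Monge--Amp\`ere-type operators can be written in divergence or linearized form $a^{ij}(x) D_{ij} w + b^i(x) D_i w = 0$ with $(a^{ij})$ positive definite (interpolating the Hessians of $u$ and $v$), so $w$ satisfies a linear elliptic equation with no zeroth-order term; the boundary condition $|Du|, |Dv| \to \infty$ must be used to rule out $w$ attaining its extrema in a way that forces a nonconstant solution — I would argue that $w$ cannot have an interior strict max or min by the strong maximum principle, and then upgrade this, using the gradient blow-up at the boundary as a barrier, to conclude $w$ is constant.

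For the second assertion, assume the interior ball condition: every $x_0 \in \bd\Omega$ lies on a sphere $\bd B$ with $\Omega \subset B$. The goal is to show $u$ extends continuously (indeed the full statement, with the blow-up $u \to +\infty$) is the right behavior for a soliton whose graph closes off at $\bd\Omega \times \mathbb{R}$ asymptotically. Here the strategy is to construct explicit barriers from the spheres: on the ball $B$ of radius $R$ containing $\Omega$ with $x_0 \in \bd B$, the lower hemisphere (or a suitable cap) is an exact solution of the prescribed-curvature equation with curvature $R^{-2}$, which after scaling gives a comparison surface forcing $|Du| \to \infty$ and hence $u \to +\infty$ as $x \to x_0$ — more precisely, one compares $u$ with a spherical cap of the appropriate radius tangent to $\bd B$ at $x_0$, using the comparison principle for the Monge--Amp\`ere operator and the already-established interior regularity. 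The smoothness of $u$ up to nothing (it is smooth in the interior only; the statement only claims interior smoothness plus $u \to +\infty$) then follows from the interior estimates already obtained.

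The main obstacle I anticipate is the stability of the a priori estimates in the approximation — specifically, controlling the solutions $u_j$ uniformly on interior compact sets when the boundary data are going to infinity and the domains are degenerating toward a domain that may have corners or flat boundary pieces (so the limiting $\Omega$ need \emph{not} satisfy the interior ball condition in the first, general assertion). The resolution is that the right invariant to track is not the pointwise size of $u_j$ or $Du_j$ near $\bd\Omega$, but the total Gauss curvature, which is pinned to $2\pi$; this geometric constraint is exactly what keeps the graphs from sliding off over interior points and is the reason the normalization constant in \eqref{eq:INT GCFeq} is $2\pi/\cA(\Omega)$. Making the blow-up rate precise enough to run the barrier comparison at the boundary in the second part — matching a spherical cap of the correct radius — is the other delicate point, but the interior ball condition is tailored precisely to make such a cap available.
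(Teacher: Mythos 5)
The paper does not actually prove this statement: it is quoted from Urbas \cite{U98GCFsoliton}, with existence and uniqueness deferred to Theorem 4.8 of \cite{U88GlobalMAeq}, and Urbas's argument runs through the Legendre transform rather than the interior exhaustion you propose. So I can only evaluate your proposal on its own terms, and it has genuine gaps.

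The most serious one is in the second assertion. A spherical cap of constant Gauss curvature has \emph{bounded} height, so no comparison with it can ever force $u(x)\to+\infty$ as $x\to x_0$; moreover the equation prescribes $K=\frac{2\pi}{\cA(\Omega)}\langle\vec n,\vec e_3\rangle$, not $K=\mathrm{const}$, so a constant-curvature cap is not an obvious sub- or supersolution of \eqref{eq:INT GCFeq}. The barrier that the enclosing-ball hypothesis is tailored for is the \emph{radially symmetric translator over the ball $B$ itself}: on a disk of radius $R$ one integrates the ODE to get $(1+|u'|^2)^{-1/2}=(R^2-r^2)/R^2$, hence logarithmic blow-up $u\sim -\tfrac R2\log(R-r)$ at $\bd B$; since $\cA(B)>\cA(\Omega)$ its constant $2\pi/\cA(B)$ is smaller, the comparison goes the right way after a vertical translation, and letting the touching point approach $x_0\in\bd B\cap\bd\Omega$ gives $u\to+\infty$. (Note also that the hypothesis is an \emph{enclosing}-ball condition, $\Omega\subset B$ with $x_0\in\bd B$, not an interior ball condition; for the square it holds only at the vertices, which is exactly why the paper's main theorem has blow-up only at $V$.)

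In the first assertion, the appeal to ``total curvature $=2\pi$'' does not by itself yield locally uniform two-sided bounds on the normalized $u_j$, and --- more importantly --- nothing in your scheme shows that the limit is \emph{complete}, i.e.\ that $|Du|\to\infty$ on all of $\bd\Omega$, equivalently $Du(\Omega)=\rtw$: a priori the gradient image of the limit could be a proper subset of $\rtw$. This is precisely where the exact constant $2\pi/\cA(\Omega)$ must enter quantitatively, and the clean mechanism is duality: $v=u^*$ solves $\det D^2v=\frac{\cA(\Omega)}{2\pi}(1+|p|^2)^{-3/2}$ on $\rtw$ with prescribed gradient image $Dv(\rtw)=\Omega$, and $\int_{\rtw}\frac{\cA(\Omega)}{2\pi}(1+|p|^2)^{-3/2}\,dp=\cA(\Omega)$ is the exact compatibility for this second boundary value problem, whose solvability and uniqueness up to constants is Urbas's Theorem 4.8. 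Your uniqueness sketch has the same defect: with no Dirichlet data and only gradient blow-up on an open bounded domain, the strong maximum principle applied to $w=u-v$ does not close the argument; one needs a sliding/first-touching argument or the dual formulation.
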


\bigskip

This  result guarantees that there exists a unique $C^1$ translator $\Sigma=\bd\{(x,t):x\in \Omega, t \geq u(x)\}$ for any open bounded convex domain $\Omega$. Also, if  $\Omega$ is a {\em uniformly convex} domain, then $\Sigma$ is  {\em strictly  convex}, and thus 
$C^\infty$ smooth by standard estimates.  However, if $\Omega$ is weakly convex, then $\Sigma$ may not be  strictly  convex  on the boundary of 
$\Omega$. Richard Hamilton conjectured that if $\Omega$ is a {\em square}, then $\Sigma$ has {\em flat sides}  on the boundary of $\Omega$. 
This is shown in the next  picture.

\def\Aangle{81}
\def\Bangle{0}
\def\Cangle{0}
\def\AAangle{0}
\def\BAangle{20}
\def\CAangle{-5}

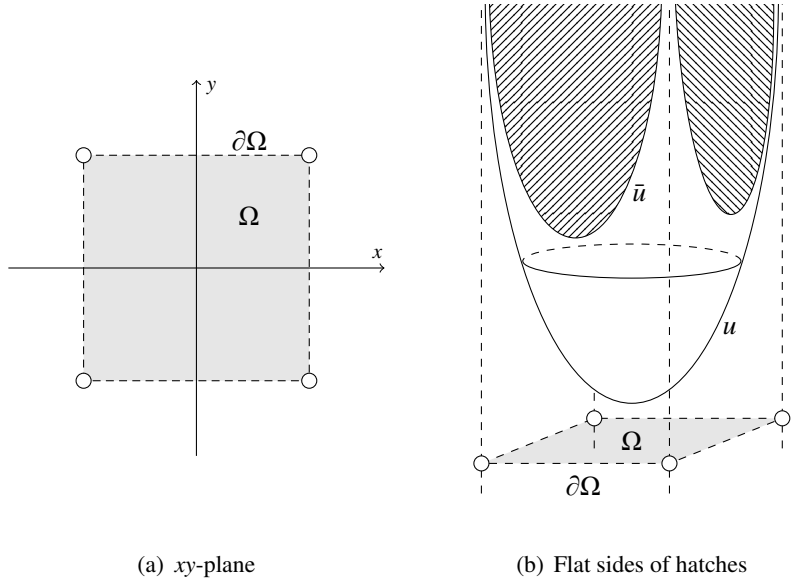
\begin{figure}[h]
\subfigure[$xy$-plane]{
\begin{tikzpicture}\label{fig:INT Domain}
[x=1cm,y=1cm] \clip(-1,-2) rectangle (4,5);
\draw[fill=gray!20!,densely dashed] (0,0)--(0,3)--(3,3)--(3,0)--(0,0);
\draw[fill=white]  (0,0) circle (0.1cm);
\draw[fill=white] (0,3) circle (0.1cm);
\draw[fill=white]  (3,0) circle (0.1cm);
\draw[fill=white]  (3,3) circle (0.1cm);
\draw[->] (1.5,-1) -- (1.5,4);
\draw[->] (-1,1.5) -- (4,1.5);
\begin{scriptsize}
\draw[color=black] (2.2,2.2) node[scale=1.3] {$\Omega$};
\draw[color=black] (2.2,3.2) node[scale=1.3] {$\bd\Omega $};
\draw[color=black] (1.7,3.9) node {$y$};
\draw[color=black] (3.9,1.7) node {$x$};
\end{scriptsize}
\end{tikzpicture}
}
\subfigure[Flat sides of hatches]{
        \begin{tikzpicture}\label{fig:INT Flat sides}
[x=1cm,y=1cm] \clip(-3,-5) rectangle (3,2);
\tikzset{xyplane/.estyle={cm={
cos(\Bangle),sin(\Bangle)*sin(\Aangle),sin(\Cangle)*sin(\Bangle),
cos(\Cangle)*cos(\Aangle)-sin(\Cangle)*cos(\Bangle)*sin(\Aangle),(0,0)}}}
\tikzset{Fplane/.estyle={cm={
cos(\BAangle),sin(\BAangle)*sin(\AAangle),sin(\CAangle)*sin(\BAangle),
cos(\CAangle)*cos(\AAangle)-sin(\CAangle)*cos(\BAangle)*sin(\AAangle),(1.25,2.4)}}}
\draw[dashed,fill=gray!20!] (2,-3.5)--(-0.5,-3.5)--(-2,-4.1)--(0.5,-4.1)--(2,-3.5);
\draw[dashed] (-0.5,-3.9) -- (-0.5,3);
\draw[dashed] (-0.5,-3) -- (-1.5,0);
\draw[fill=white] (0,2.2) ellipse (1.95cm and 5.5cm);
\draw[dashed] (-2,-4.5) -- (-2,3);
\draw[dashed] (0.5,-4.5) -- (0.5,3);
\draw[dashed] (2,-3.9) -- (2,3);
\draw[pattern=north east lines] (-0.75,2.4) ellipse (1.15cm and 3.5cm);
\draw[xyplane,dashed] (1.45,-9) arc (0:180:1.45cm);
\draw[xyplane] (-1.45,-9) arc (180:360:1.45cm);
\draw[Fplane,pattern=north west lines] (-0.03,0) ellipse (0.7cm and 3.2cm);
\draw[fill=white]  (2,-3.5) circle (0.1cm);
\draw[fill=white]  (-0.5,-3.5) circle (0.1cm);
\draw[fill=white]  (-2,-4.1) circle (0.1cm);
\draw[fill=white]  (0.5,-4.1) circle (0.1cm);
\begin{scriptsize}
\draw[color=black] (0,-3.8) node[scale=1.3] {$\Omega$};
\draw[color=black] (-0.65,-4.4) node[scale=1.3] {$\bd\Omega $};
\draw[color=black] (1.3,-2.3) node[scale=1.3] {$u$};
\draw[color=black] (0.1,-0.5) node[scale=1.3] {$\bar u$};
\end{scriptsize}
\end{tikzpicture}
}
\caption{Translator $\Sigma$ on a square}\label{fig:INT Solution}
\end{figure}

The above picture describes that in the case that $\Omega$ is the square and $L$ is one of the  edges of  $\partial \Omega$, for any $x_0 \in L$
the limit $\bar u(x_0) := \lim_{x \to x_0} u(x)$ exists and defines a function $\bar u: L \to \mathbb{R}$ whose graph is the boundary of one of the flat sides.

The main result in this paper concerns with the general case of translators in $\mathbb{R}^3$ with flat sides and in particular provides  the proof of Hamilton's conjecture and establishes the {\em optimal regularity}  of the translator in this degenerate case. The result is given for surface solutions in $\mathbb{R}^3$ but it can be generalized  in higher dimensions under certain constraints. See the discussion at the end of this section for further remarks about the higher dimensional case.

\begin{theorem}\label{thm:INT Flat sides}
Let $\Omega$ be a convex open bounded domain in $\mathbb{R}^2$, and let $u$ be a solution to \eqref{eq:INT GCFeq} on $\Omega$. Then, the corresponding solution $\Sigma$ to \eqref{eq:INT GCF} is  of class $C^{1,1}_{\text{loc}}$.

 Suppose that the boundary $\bd \Omega$ contains a line segment $L=\{tp+(1-t)q:p,q \in \bd \Omega , t\in (0,1)\}$. Then, there exists a function $\bar u:L \to \mathbb{R}$ such that
\begin{equation*}
\bar u(x_0)= \lim_{x \to x_0}u(x).
\end{equation*}
\end{theorem}

\medskip

\bigskip The Gauss curvature flow was first introduced by Firey in \cite{Firey},   where he  showed that a closed  strictly convex and centrally symmetric solution in $\rth$  converges to a round point. In \cite{Tso} Tso established the existence of  closed and strictly convex solutions in $\rno$ and showed that it converges to a point. Andrews \cite{Andrews00}  extended Tso's  result  to the flow by positive powers of the Gauss curvature, namely a strictly convex closed solution, to the $\alpha$-Gauss curvature flow $\bd_t F=K^{\alpha}\vec{n}$.

The asymptotic behavior of strictly convex closed solutions has been widely studied. In the affine-invariant case $\alpha = \frac{1}{n+2}$, Calabi \cite{Calabi} showed that closed self-similar solutions are ellipsoids, and Andrews \cite{Andrews96} established the convergence of rescaled solutions to the ellipsoids. In the higher power case $\alpha>\frac{1}{n+2}$, the convergence of rescaled solutions to the round spheres was established by Chow \cite{Chow} for $\alpha=\frac{1}{n}$, Andrews \cite{Andrews99} for $\alpha=1$ and $n=2$, and Andrews-Chen \cite{Andrews-Chen} for $\alpha \in (\frac{1}{2},1)$ and $n=2$. 

Recently, Guan-Ni \cite{Guan-Ni} showed the convergence of the rescaled flows to closed self-similar solutions for $\alpha=1$ and $n\geq 2$, and jointly with Andrews \cite{Andrews-Guan-Ni} they extended the result for $\alpha>\frac{1}{n+2}$. The uniqueness of strictly convex closed solutions was proven by Choi-Daskalopoulos \cite{Choi-Daskalopoulos2} for $\frac{1}{n}<\alpha < 1+\frac{1}{n}$, and the result was extended for $\alpha >\frac{1}{n+2}$ in their joint work with Brendle \cite{Brendle-Choi-Daskalopoulos}. In the work \cite{Brendle-Choi-Daskalopoulos}, they also gave an alternate proof of the result in \cite{Calabi} for $\alpha=\frac{1}{n+2}$.

\bigskip

Regarding the non-comapct case, the all-time existence of a non-compact complete and strictly convex solution $\alpha$-Gauss curvature flow  in $\rno$ was established by Choi-Daskalopoulos-Kim-Lee \cite{Choi-Daskalopoulos-Kim-Lee}. Therefore, the convergence of such solutions to complete self-similar solutions becomes a natural question. Different from the closed case, self-expanders for $\alpha>0$ and translators $\alpha >\frac{1}{2}$ have been  completely classified  by Urbas \cite{Urbas98} in $C^1$ sense. In this paper, we will improve the $C^1$ regularity to $C^{1,1}$ for $n=2$ and $\alpha=1$, and we will show that the $C^{1,1}$ is the optimal regularity. See Remark \ref{rmk:INT Optimal Regularity}. As our main result states, we are especially interested in translators with flat sides.

Closed solutions of the Gauss curvature flow  in $\rth$ with  flat sides was considered by R. Hamilton in \cite{Hamilton}, and the
$C^\infty$  regularity of its free boundary was studied in \cite{Daskalopoulos-Hamilton, Daskalopoulos-Lee, Kim-Lee-Rhee}. The optimal $C^{1,1}$ regularity for $n=2$ and $\alpha=1$ was obtained in \cite{Andrews99}, and the $C^{1,\beta}$ regularity for other $n$ and $\alpha$ was established in \cite{Daskalopoulos-Savin}.

\smallskip

\noindent{\em Discussion on the Proof: }  

\smallskip
To establish the interior $C^{1,1}$ regularity of the surface $\Sigma$ we will bound $\eta \, \Lambda$,  where 
$\Lambda= \max (\lambda_1(p), \lambda_2(p))$  denotes the largest principal curvature of $\Sigma$ 
and $\eta$ is the cut-off function $\eta=(|F(p)|^2-R^2)_+$ as defined in Notation \ref{not:INT Op}. If one chooses 
instead the  standard cut-off function $\eta_s(p) = (R^2-|F(p)|^2)_+$ or the  level set cut-off function $\eta_L(p)=(M-\langle \vec{e}_3,\vec{n}(p)\rangle)_+$, then the estimate fails because of counter examples. 
Since  the equation \eqref{eq:INT GCF} only depends on the area $\cA$, the solution $\Sigma_1$ defined on $(0,1)\times (0,1)$ satisfies the same equation to the solution $\Sigma_\epsilon$ defined on $(0,\epsilon)\times (0,1/\epsilon)$. However, it is clear that $\Sigma_\epsilon$ has larger curvature than $\Sigma_1$ near the tip. However, $\eta_s$ and $\eta_L$ can not distinguish 
between $\Sigma_1$ and $\Sigma_\epsilon$. Thus, one  should use a cut-off function which included information about  the global structure $\Omega$ as the cut-off function $\eta$ in Notation \ref{not:INT Op}.

\smallskip
To show the existence of flat sides, we need to  deal with technical  difficulties arising from the non-compactness of our solution $\Sigma$. 
One of the difficulties is the {\em double degeneracy}  on the flat sides. We may consider $(0,1,0) \in \rth$ as the height vector, and define a convex function $h(x,z)$ whose graph $(x,h(x,z),z)$ is the lower part of the solution $\Sigma$. Then, $h(x,z)$ satisfies
\begin{align*}\label{eq:INT Horizontal Eq}
\frac{\det D^2 h}{(1+|Dh|^2)^{\frac{3}{2}}}=-\frac{2\pi}{\cA(\Omega)}D_z h.\tag{1.3}
\end{align*}
Then, near the flat side $\{(x,z):h=-1\}$, the right hand side $-D_z h=|Dh| \, \langle -v,e_2\rangle$ has two degenerate factors $|Dh|$ and $\langle -v,e_2\rangle$, where $e_2=(0,1)$ and $v \in \rtw$ is the outward normal direction of the level set of $h$. Since the level sets of $h$ becomes parallel to $e_2$ at the infinity, $\langle -v,e_2\rangle$ goes to zero at the infinity.

Another challenge comes from the fact  that the non-degeneracy of $|Dh|$ depends on the global structure $\Omega$. In the previous works \cite{Daskalopoulos-Hamilton, Daskalopoulos-Lee, Hamilton, Kim-Lee-Rhee}, the lower bound for $|Dh|\, h^{-\frac{1}{2}}$ depends on the initial data, and this does not apply  in the elliptic setting. Hence, we have to develop a new  non-degeneracy estimate for a solution $h$ to the horizontal equation \eqref{eq:INT Horizontal Eq}  which includes the information of the global structure of the domain $\Omega$. We will consider the gradient bound of a solution $u$ to \eqref{eq:INT GCFeq} instead of the non-degeneracy of a solution $h$ to \eqref{eq:INT Horizontal Eq} for convenience.

\medskip

\noindent{\em Outline of the paper:}  

\smallskip

A brief {\em outline} of this paper is as follows : In section 2 we will summarize the notation which will be used throughout 
the paper.  In section 3, we will derive  interior $C^2$ estimates  for any strictly convex complete smooth solution $\Sigma$
by using a  Pogorelov type computation which we  localize by introducing the cut-off function $\eta$ mentioned above. 
 If a portion of the boundary $\bd\Omega$ is sufficiently close  to a line segment $L$ with  normal direction $e$, then  by using our $C^2$ estimate, we will show that the solution $\Sigma$ can be written as a  graph with respect to the direction $e$ in a certain region including $L\times \mathbb{R}$. 

The proof of the $C^2$ estimate although somehow technical, follows known  techniques. The main challenge 
 and new feature  of  this work lies in the proof of 
the existence of the flat side  in Sections 4 and 5.  
Section 4 contains one of the main points in this paper which follow from the  construction of  a barrier. Since a complete supersolution defined on $\Omega'$ satisfies $\cA(\Omega') > \cA(\Omega)$, we can not put such a supersolution on top of a our solution $\Sigma$. Instead, we need to cut a complete supersolution 
and slide it to contact $\Sigma$. This way we  obtain the gradient bound of the level set at the contact point, which leads to the local lower bound $\langle -v,e_2\rangle$. This implies the crucial partial derivative estimate of a solution $u$ to \eqref{eq:INT GCFeq} in Theorem \ref{thm:PGE Gradient bound for level set}.  
In section 4, we derive a separable equation from the equation \eqref{eq:INT GCFeq}, namely 
\begin{align*}
\frac{u_{xx}u_{yy}}{(1+u_y^2)^{\frac{3}{2}}} \geq \frac{\det D^2u}{(1+|Du|^2)^{\frac{3}{2}}} =\frac{2\pi}{\cA(\Omega)}.
\end{align*}
By integrating this equation, we obtain the gradient bound at a certain point $(x_0,-1+\epsilon)$ in Section 5, Lemma \ref{lemma:DTF Gradient estimate}. Then, we establish the distance between the tip and the flat side bound  in Theorem \ref{thm:DTF Distance}. The existence of the flat side then follows and is shown in Theorem 
\ref{thm:DTF main thm}.

\begin{remark}[Optimal regularity]\label{rmk:INT Optimal Regularity}
Let us briefly remark  that if there exists a flat side on a translator $\Sigma$, then $\Sigma$ has at most $C^{1,1}$ regularity. The horizontal equation \eqref{eq:INT Horizontal Eq} yields
\begin{align*}
h_{vv}h_{\tau\tau} \geq \frac{\det D^2 h}{(1+|Dh|^2)^{\frac{3}{2}}} \geq -\frac{2\pi}{\cA}|Dh|\langle v,e_2 \rangle
\end{align*}
where $v(x,z)$ is the outward normal and $\tau(x,z)$ is a tangential direction of the level set of $h(x,z)$ at a point $(x,z)$. We denote by $L_{r}$ the level set $\{(x,z):h(x,z)=r \}$, and denote by $\kappa(x,z)$ the curvature of $L_{h(x,z)}$ at $(x,z)$. Since we have $h_{\tau\tau}=|Dh| \, \kappa$, the inequality above gives $h_{vv} \, \kappa \geq  -2\pi \cA^{-1}\langle v,e_2 \rangle$. We will establish the local lower bound for $-\langle v,e_2 \rangle$ in section 3, which guarantees that 
\begin{align*}
h_{vv}\kappa \geq c.
\end{align*}
We choose a neighborhood $U$ of a point $(x_0,z_0)$ on the free boundary $\Gamma$, namely $(x_0,z_0) \in \Gamma \eqqcolon \bd L_{-1}$. Since the level sets $L_r$ monotonically converge to $\Gamma$, there exists a constant $c$ such that $\int_{L_r \cap U} ds \geq c$ for $r$ close enough to $-1$, where $s$ is the arc length parameter. Hence, the following holds
\begin{align*}
2 \pi \geq \int_{L_r} \kappa ds \geq \int_{L_r \cap U} \kappa ds \geq \frac{c}{\max_{L_r \cap U} h_{vv}}.
\end{align*}
Thus, $\max_{L_r \cap U} h_{vv} \geq c$ holds for some uniform constant $c$. However, we have $D^2h=0$ on $L_{-1}$. Therefore, $D^2h$ is not a continuous function.
\end{remark}

\smallskip

\noindent{\em Discussion  on the  higher dimensional case: }  

\smallskip

We consider a bounded open convex domain $\Omega \in\mathbb{R}^n$ with a flat side $L\subset \bd\Omega$, namely $L$ is an open convex set of a hyperplane. The results in sections 4 and 5 can be naturally extended to higher dimensions. So, we need to show that approximated strictly convex solutions are graphs with respect to the normal direction $e$ of $L$ in a certain uniform region including $L\times \mathbb{R}$.  Therefore, if $\Omega$ has the axial symmetry with respect to the direction $e$, then the existence of a flat side on $\Sigma$ readily follows.

For the case without the symmetry, we need uniform estimates for the size of the region where the approximated solutions are graphs with respect to $e$. In section 3, we utilize a local $C^{1,1}$ estimate to derive the size estimate. However, the translator would have at most local $C^{1,\frac{1}{n-1}}$ regularity as the parabolic case. Therefore, it would be an interesting question to find an appropriate partial regularity of the translator in higher dimensions, which yields the desired size estimate for the region  where the solution is a graph with respect to $e$.

\section{Notation}
For the convenience of the reader, we give below some basic notation which will be frequently used in what follows. We will use Definition \ref{def:INT cutting ball} and Notation \ref{not:INT Op} in section 3. Definition \ref{def:INT Axes of sym} and Notation \ref{not:INT Non-Dgn} will be used in section 4 and 5.

\begin{notation}[To be used in section 3]\label{not:INT Op} We will use some standard notation on the metric, second fundamental form and the linearized operator. 

\begin{enumerate} 
\item Let $F:M^2\to \rth$ be an immersion defining a smooth and complete surface $\Sigma$ by $F(M^2)=\Sigma$. If $B_R(Y)$ cut $\Sigma$ as Definition \ref{def:INT cutting ball}, we define a cut-off function $\eta:M^2 \to \mathbb{R}$ by
 $$\eta(p)=(|F(p)-Y|^2-R^2)_+.$$

\item We recall that $g_{ij} = \langle F_i, F_j \rangle$, where $F_i \coloneqq \nabla_i F$. Also, we denote as usual by $g^{ij}$  the inverse matrix of
$g_{ij}$ and $F^i=g^{ij} \, F_j$. 

\item For a strictly convex smooth hypersurface $\Sigma_t$, we denote by $b^{ij}$ the the inverse matrix $(h^{-1})^{ij}$ of its  {\em second fundamental form}  $h_{ij}$, namely $b^{ij}h_{jk}=\delta^i_k$.

\item We denote by $\cL$  the {\em linearized }   operator 
$$\cL =  K b^{ij}\nabla_i \nabla_j \, .$$
Furthermore, $\langle \;, \;\rangle_\cL$ denotes the associated inner product 
$\displaystyle \langle \nabla f,\nabla g \rangle_\cL =   K b^{ij}\nabla_i f \nabla_j g$, 
 where $f,g$ are differentiable functions on $M^n$, and $\|\cdot\|_\cL $ denotes the $\cL$-norm given by the inner product $\langle \;, \;\rangle_\cL$ 

\item $H$ and $\Lambda$ denote the {\em  mean curvature} and the {\em largest principal curvature}, respectively.
\end{enumerate}

\end{notation}

\begin{definition}[Cutting ball]\label{def:INT cutting ball}
Given a ball $B_R(Y) \subset \rth$ and a complete surface $\Sigma\subset \rth$ , we say that a compact surface $\Sigma_c$ with boundary $\bd\Sigma_c$ is cut off from $\Sigma$ by $B_R(Y)$, if $ \Sigma_c \subset \Sigma$ and $\bd\Sigma_c \subset \bd B_R(Y)$ hold.
\end{definition}

\def\Aangle{87}
\def\Bangle{0}
\def\Cangle{0}
\begin{figure}[h]
\begin{tikzpicture}\label{fig:INT Cutting ball}
[x=1cm,y=1cm] \clip(-12.5,-1.2) rectangle (-0.5,1.2);
\tikzset{Tplane/.estyle={cm={
cos(\Bangle),sin(\Bangle)*sin(\Aangle),sin(\Cangle)*sin(\Bangle),
cos(\Cangle)*cos(\Aangle)-sin(\Cangle)*cos(\Bangle)*sin(\Aangle),(0,0)}}}
\draw[Tplane,dashed] (0,12) arc (90:120:12cm);
\draw[Tplane,dashed] (0,-12) arc (270:240:12cm);
\draw[Tplane] (12*cos 120,12*sin 120) arc (120:240:12cm);
\draw (-5.17,0) circle (1cm);
\draw[fill=black] (-5.17,0) circle (0.05cm);
\draw (-5.17,0) -- (-5.17+cos 30,sin 30) ;
\begin{scriptsize}
\draw[color=black] (-5.4,0) node[scale=1.3] {$Y$};
\draw[color=black] (-4.6,0.1) node[scale=1.3] {$R$};
\draw[color=black] (-6.7,0) node[scale=1.3] {$B_R(Y)$};
\draw[color=black] (-3.8,0.4) node[scale=1.3] {$\Sigma$};
\draw[color=black] (-9,0.6) node[scale=1.3] {$\Sigma_c$};
\end{scriptsize}
\end{tikzpicture}
\caption{Cutting ball}
\end{figure}
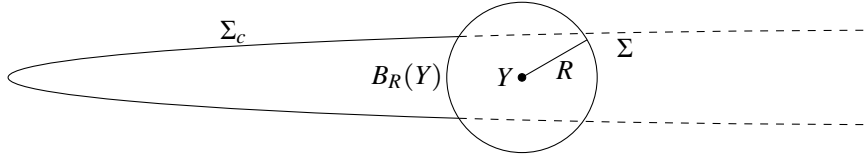

\begin{definition}[Axial symmetry]\label{def:INT Axes of sym}
We say that a surface $\Sigma \subset \rth$ has {\em axial  symmetry}, if $(x,y,z) \in \Sigma$ guarantees $(-x,y,z), (x,-y,z) \in \Sigma$. Similarly, a set $\Omega \subset \rtw$ has {\em axial symmetry}, if $(x,y)\in \Omega$ guarantees $(-x,y),(x,-y)\in \Omega$.
\end{definition}

\begin{notation}[To be used in sections 4 and 5]\label{not:INT Non-Dgn} Also, we  summarize some further notation. 
\begin{enumerate}
\item Given a set $A \subset \rth$ and a constant $s$, we denote the $x=s$ level set by $L^x_s(A)=\{(s,y,z)\in A\} $. Similarly, we denote $y=s$ and $z=s$ level set by $L^y_s(A)$ and $L^z_s(A)$, respectively.

\item Given a constant $s$ and a function $f:\Omega \to \mathbb{R}$ with $\Omega \subset \rtw$, we denote by $L_s(f)$ the $s$-level set $\{(x,y)\in \Omega : f(x,y)=s\}$.

\item We let $e_1$ and $e_2$ the unit vectors $(1,0)$ and $(0,1)$, respectively.

\item For a complete and convex curve $\Gamma \subset \rtw$, its convex hull $\cv(\Gamma)$ is given by $$\cv(\Gamma) = \{(t x+(1-t)y:x,y \in \Gamma, t \in [0,1]\}.$$ If $A$ is a subset of $\cv(\Gamma)$, then we say $A$ is enclosed by $\Gamma$ and use the notation 
$$A \prec \Gamma .$$ 

\item Given a set $A \subset \rtw$, $\text{cl} (A)$ and $\text{Int} (A)$ mean the closure and the interior of $A$, respectively. 
\end{enumerate}
\end{notation}

\section{Optimal $C^{1,1}$ regularity}

In this section, we will establish a local curvature estimate for smooth strictly convex complete solutions 
of equation  \eqref{eq:INT GCF}. In the last section we will use this estimate to obtain the optimal $C^{1,1}$ regularity for a weakly convex solution
of  \eqref{eq:INT GCF} in the degenerate case.   
We recall that a solution of \eqref{eq:INT GCF} has an immersion $F:M^2\to \rth$ of $F(M^2)=\Sigma$. Given a ball $B_R(Y)$  we define the  associated cut-off function $\eta$  by $\eta(p)=(|F(p)-Y|^2-R^2)_+$.  We have the following result. 

\begin{theorem}[Curvature bound]\label{thm:OP Optimal regularity}
Let $\Sigma$ be a smooth strictly convex complete solution of \eqref{eq:INT GCF}. Let  $\Sigma_c$ be the  cut off from $\Sigma$ by a ball $B_R(Y) \subset \rth$ as defined in Definition \ref{def:INT cutting ball}. Then, for any $p\in M^2$ with $F(p) \in \Sigma_c$, the maximum principal curvature 
$\Lambda(p) := \max \{ \lambda_1(p), \lambda_2(p) \}$ satisfies 
\begin{align*}
\eta \Lambda (p) \leq \frac{9\pi}{\cA(\Omega)}\sup_{F(q) \in \Sigma_c} |F(q)-Y|^{3}.
\end{align*}
\end{theorem}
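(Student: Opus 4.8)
I would run a localized Pogorelov-type argument for the linearized operator $\cL=Kb^{ij}\nabla_i\nabla_j$ applied to the test quantity $W:=\eta\,\Lambda$. Since $\Sigma_c$ is compact and $\eta$ vanishes on $\bd\Sigma_c\subset\bd B_R(Y)$, the nonnegative function $W$ attains a maximum on $\Sigma_c$; if it equals $0$ there is nothing to prove, so assume it is attained at an interior point $p_0$ with $\Lambda(p_0)>0$. Pick a local orthonormal frame, parallel at $p_0$, that diagonalizes the second fundamental form there with $h_{11}(p_0)=\lambda_1=\Lambda(p_0)\ge\lambda_2=h_{22}(p_0)$. Then $w:=\eta\,h_{11}$ is smooth near $p_0$, satisfies $w\le W$ with equality at $p_0$, so $p_0$ is a local maximum of $w$ as well. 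This gives the first-order identity $\nabla_i h_{11}=-\tfrac{h_{11}}{\eta}\nabla_i\eta$ and the inequality $\cL w(p_0)\le 0$, which after rewriting the cross term $2Kb^{ij}\nabla_i\eta\,\nabla_j h_{11}$ via the first-order identity reads
\[
0\;\ge\;\eta\,\cL h_{11}\;+\;h_{11}\,\cL\eta\;-\;\frac{2h_{11}}{\eta}\,\|\nabla\eta\|_{\cL}^{2}\qquad\text{at }p_0 .
\]

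\textbf{The two elliptic identities.} First, from $\nabla_i\eta=2\langle F-Y,F_i\rangle$ and the Gauss formula one computes $\nabla_i\nabla_j\eta=2g_{ij}+2\langle F-Y,\vec n\rangle h_{ij}$; since $Kb^{ij}g_{ij}=H$ and $Kb^{ij}h_{ij}=2K$ this yields
\[
\cL\eta\;=\;2H+4K\langle F-Y,\vec n\rangle\;\ge\;2\Lambda-\frac{8\pi}{\cA(\Omega)}\,d,\qquad d:=\sup_{F(q)\in\Sigma_c}|F(q)-Y|,
\]
using $H\ge\Lambda$ at $p_0$, $K=\tfrac{2\pi}{\cA(\Omega)}\langle\vec n,\vec e_3\rangle\le\tfrac{2\pi}{\cA(\Omega)}$, and $|\langle F-Y,\vec n\rangle|\le|F-Y|$; multiplying by $h_{11}$ produces the decisive term $2h_{11}^{2}$. (This is precisely the place where the geometric cutoff $\eta$ is needed: for $\eta_s$ or $\eta_L$ the analogue of $\cL\eta$ has the wrong sign.) Second, differentiating the soliton identity $K=\tfrac{2\pi}{\cA(\Omega)}\langle\vec n,\vec e_3\rangle$ twice and combining the Codazzi equations, the Ricci commutation identity, and the Gauss equation $R_{ijkl}=h_{ik}h_{jl}-h_{il}h_{jk}$ (which in dimension two reduces the curvature to the single function $K$), one obtains a Simons-type identity of the schematic form
\[
\cL h_{11}\;=\;\big(\text{nonnegative reaction term of order }K\Lambda^{2}\big)\;+\;\tfrac{2\pi}{\cA(\Omega)}\,v^{k}\nabla_k h_{11}\;-\;\frac{(\nabla_1K)^{2}}{K}\;+\;Kb^{km}b^{ln}\nabla_1 h_{mn}\nabla_1 h_{kl},
\]
where $v_k=\langle F_k,\vec e_3\rangle$.

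\textbf{Closing the estimate.} Into this identity I would substitute the first-order relation $\nabla_i h_{11}=-\tfrac{h_{11}}{\eta}\nabla_i\eta$, the Codazzi identity $\nabla_1 h_{12}=\nabla_2 h_{11}$, and the once-differentiated soliton relation $Kb^{kl}\nabla_1 h_{kl}=\nabla_1 K$ (which expresses $\nabla_1 h_{22}$ through $\nabla\eta$, $v_1$ and $\langle\vec n,\vec e_3\rangle$). A direct computation then shows that in $\eta\,\cL h_{11}-\tfrac{2h_{11}}{\eta}\|\nabla\eta\|_{\cL}^{2}$ all terms carrying a factor $\eta^{-1}$ cancel identically, and what survives of the gradient contributions is bounded in absolute value by $C\,\tfrac{2\pi}{\cA(\Omega)}\,d\,h_{11}$. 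Discarding the nonnegative reaction term and inserting $h_{11}\,\cL\eta\ge 2h_{11}^{2}-\tfrac{8\pi}{\cA(\Omega)}d\,h_{11}$, the maximum-principle inequality collapses, once the constants are bookkept carefully, to a quadratic inequality of the form $0\ge 2h_{11}^{2}-\tfrac{9\pi}{\cA(\Omega)}d\,h_{11}$, whence $\Lambda(p_0)=h_{11}(p_0)\le\tfrac{9\pi}{\cA(\Omega)}d$. Finally $\eta(p_0)=|F(p_0)-Y|^{2}-R^{2}\le d^{2}$, so $W(p_0)=\eta(p_0)\Lambda(p_0)\le\tfrac{9\pi}{\cA(\Omega)}d^{3}$, and since $p_0$ maximizes $W$ the same bound holds for every $p$ with $F(p)\in\Sigma_c$.

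\textbf{Main obstacle.} The technical heart is the Simons-type computation for $\cL h_{11}$ in this degenerate, non-compact setting, together with the verification that after using the first-order relation every inverse power of $\eta$ cancels so that the remaining gradient terms are merely linear in $\Lambda$; extracting the sharp constant $9\pi/\cA(\Omega)$ then demands keeping track of all error terms rather than only orders of magnitude.
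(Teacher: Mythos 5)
Your proposal is correct and follows essentially the same route as the paper: maximize $\eta\Lambda$ on $\Sigma_c$, pass to $w=\eta h_{11}$ at the interior maximum $p_0$, use $\cL\eta=2H+4K\langle F-Y,\vec n\rangle\ge 2\Lambda-\tfrac{8\pi}{\cA}|F-Y|$ as the source of the good $2\Lambda^2$ term, compute $\cL h_{11}$ by commuting derivatives and twice differentiating the soliton identity, cancel the $\eta^{-2}$ gradient terms via the first-order relation, and close a quadratic inequality yielding $\Lambda(p_0)\le\tfrac{9\pi}{\cA}|F(p_0)-Y|$. The only imprecision is in your schematic Simons identity: at $p_0$ the zeroth-order curvature terms combine to exactly $-K^2$ (not a nonnegative reaction term), so it cannot simply be discarded but must be bounded below by $-4\pi^2/\cA^2$ via the soliton equation, which is precisely where the paper picks up the extra $-\tfrac{4\pi^2}{\cA^2}|F|^2$ in the final quadratic and hence the constant $(4+3\sqrt2)\pi\le 9\pi$.
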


\begin{proof}
We may assume, without loss of generality, that  $Y=0$. Recall the definition of the cutting ball as shown in Figure 2 above. 
The continuous function $\eta\Lambda$ attains its maximum on the compact set $\Sigma_c$ at some point $F(p_0) \in \Sigma_c$,
\begin{align*}
\eta\Lambda(p_0)= \max_{F(p)\in\Sigma_c}\eta\Lambda(p).
\end{align*}
Then, because we have $\eta=0$ on $\bd\Sigma_c$, $F(p_0)$ is an interior point of $\Sigma_c$. Thus, $\eta\Lambda$ attains a local maximum at $p_0$. Moreover, we can choose an open chart $(U,\varphi)$ with $p_0 \in \varphi(U)$ and $F(\varphi (U))\subset \Sigma_c$ such that the covariant derivatives $\{\nabla_1 F(p_0),\nabla_2 F(p_0)\}$ form an orthonormal basis of $T\Sigma_{F(p_0)}$ satisfying
\begin{align*}
&& g_{ij}(p_0)=\delta_{ij}, && h_{ij}(p_0)=\delta_{ij}\lambda_i(p_0), && \lambda_1(p_0)=\Lambda(p_0).
\end{align*}  
Next, we define the function $w:U \to \mathbb{R}$ by
\begin{align*}
w=\eta \frac{h_{11}}{g_{11}}.
\end{align*}
Then, the Euler formula guarantees $w \leq \eta \Lambda$ (c.f. Proposition 4.1 in \cite{Choi-Daskalopoulos1}). Therefore, for all $p \in U$, the following holds
\begin{align*}
w(p) \leq \eta \Lambda(p) \leq \eta \Lambda(p_0)=w(p_0).
\end{align*}
Thus, $w$ also attains its maximum at $p_0$.

Now, we consider the derivative of $w$. Then, $\nabla g_{11}=0$ gives
\begin{align*}\label{eq:OP Gradient}
\frac{\nabla_i w}{w}=\frac{\nabla_i h_{11}}{h_{11}}+\frac{\nabla_i \eta}{\eta} \;. \tag{3.1}
\end{align*}
Differentiating the equation above yields
\begin{align*}
\frac{\nabla_i\nabla_j w}{w}-\frac{\nabla_i w\nabla_j w}{w^2}=
\frac{\nabla_i\nabla_j h_{11}}{h_{11}}-\frac{\nabla_i h_{11}\nabla_j h_{11}}{(h_{11})^2}+
\frac{\nabla_i\nabla_j \eta}{\eta}-\frac{\nabla_i \eta\nabla_j \eta}{\eta^2} \;
\end{align*}
and  multiplying  by $Kb^{ij}$, we obtain
\begin{align*}\label{eq:OP 2nd order derivative}
\frac{\cL\, w}{w}-\frac{\|\nabla w\|_{\cL}^2}{w^2}=\frac{\cL \,h_{11}}{h_{11}}-\frac{\|\nabla h_{11}\|_{\cL}^2}{(h_{11})^2}+\frac{\cL \,\eta}{\eta}-\frac{\|\nabla \eta\|_{\cL}^2}{\eta^2}\;.\tag{3.2}
\end{align*}
Observing  next that $\cL \, F := Kb^{ij}\, \nabla_j\nabla_j F=Kb^{ij}\, h_{ij}\, \vec{n}= 2K \, \vec{n}$, we compute $\cL \,\eta$ on the support of $\eta$ as follows:
\begin{align*}
\cL\,\eta=\cL\, |F|^2 = 2 \langle F ,  \cL F  \rangle + 2\langle \nabla F, \nabla F \rangle_{\cL}= 4K \langle F,\vec{n}\rangle+ 2Kb^{ij}g_{ij}=4 K\langle F,\vec{n}\rangle+ 2H.
\end{align*}
Thus, $4K = 8\pi \cA^{-1}\langle \vec{e}_3,\vec{n} \rangle \leq 8\pi \cA^{-1}$ and $2H \geq 2\Lambda$ imply
\begin{align*}\label{eq:OP L eta}
\cL\,\eta\geq -8\pi \cA^{-1} |F|+ 2\Lambda.\tag{3.3}
\end{align*}

Since $w$ attains its maximum at $p_0$, we have $\nabla w (p_0)=0$, and thus 
\eqref{eq:OP Gradient} gives
\begin{align*}
\frac{\|\nabla h_{11}\|_{\cL}^2}{(h_{11})^2}(p_0)=\frac{\|\nabla \eta\|_{\cL}^2}{\eta^2}(p_0).
\end{align*}
Hence, combining $\cL w (p_0)\leq 0$, \eqref{eq:OP 2nd order derivative}, \eqref{eq:OP L eta} and the equation above yields  the following at $p_0$
\begin{align*}\label{eq:OP 1st reduced eq at p_0}
0  \geq  \frac{\cL\, h_{11}}{h_{11}}+\frac{2\Lambda}{\eta}-\frac{8\pi|F|}{\cA\eta}-\frac{2\|\nabla h_{11}\|_{\cL}^2}{(h_{11})^2}\;.\tag{3.4}
\end{align*}

To compute $\cL \, h_{11}$, we begin by differentiating $K$,
\begin{align*}\label{eq:OP gradient of K}
\nabla_1 K = Kb^{ij}\nabla_1 h_{ij}. \tag{3.5}
\end{align*}
By differentiating the equation above again, we obtain
\begin{align*}\label{eq:OP 2nd order derivative of K}
\nabla_1 \nabla_1 K=Kb^{ij}\nabla_1\nabla_1 h_{ij}+Kb^{ij}b^{kl}\nabla_1h_{ij}\nabla_1h_{kl}-Kb^{ik}b^{jl}\nabla_1h_{ij}\nabla_1h_{kl}. \tag{3.6}
\end{align*}
We can derive $\cL \, h_{11}$ from the first term $Kb^{ij}\nabla_1\nabla_1 h_{ij}$ as follows
\begin{align*}\label{eq:OP L h_11}
Kb^{ij}\nabla_1\nabla_1 h_{ij}&=Kb^{ij}\nabla_1\nabla_i h_{j1}=Kb^{ij}(\nabla_i\nabla_1 h_{j1}+R_{1ijk}h^k_1+R_{1i1k}h^k_j) \tag{3.7}\\
&=Kb^{ij}\nabla_i\nabla_j h_{11}+Kb^{ij}(h_{1j}h_{ik}-h_{1k}h_{ij})h^k_1+Kb^{ij}(h_{11}h_{ik}-h_{1k}h_{i1})h^k_j\\
&=\cL \, h_{11}-2Kh_{1k}h^k_1+KHh_{11}. 
\end{align*}
On the other hand, differentiating \eqref{eq:INT GCF} yields
\begin{align*}\label{eq:OP derivative of GCF}
\nabla_1 K= \frac{2\pi}{\cA}\, \langle \nabla_1 \vec{n},\vec{e}_3 \rangle=  -\frac{2\pi}{\cA} h_{1k} \, \langle F^k ,\vec{e}_3 \, \rangle. \tag{3.8}
\end{align*}
To get the right hand side of \eqref{eq:OP 2nd order derivative of K}, we differentiate the equation above,
\begin{align*}\label{eq:OP 2nd order derivative of GCF}
\nabla_1\nabla_1 K=  -\frac{2\pi}{\cA}\nabla_1 h_{1k}\langle F^k ,\vec{e}_3 \rangle-\frac{2\pi}{\cA} h_{1k}h^k_1\, \langle \vec{n} ,\vec{e}_3 \rangle=-\frac{2\pi}{\cA} \, \nabla_k h_{11}\langle F^k ,\vec{e}_3 \rangle -K h_{1k} h^k_1. \tag{3.9}
\end{align*}
Combining  \eqref{eq:OP 2nd order derivative of K}, \eqref{eq:OP L h_11}, and \eqref{eq:OP 2nd order derivative of GCF},  
we obtain the following at $p_0$
\begin{align*}\label{eq:OP L h_11 at p_0}
\cL\, h_{11} = & 2|\nabla_2 h_{11}|^2-2\nabla_1 h_{11}\nabla_1 h_{22}- \frac{2\pi}{\cA}\nabla_k h_{11}\langle F^k,\vec{e}_3\rangle-K^2.
\end{align*}
Hence, at $p_0$, applying the equation above to \eqref{eq:OP 1st reduced eq at p_0} and the definition of the norm $\|\cdot\|_{\cL}^2$ yield
\begin{align*}
0 \geq & \, \frac{1}{h_{11}}\big(  2|\nabla_2 h_{11}|^2-2\nabla_1 h_{11}\nabla_1 h_{22}- \frac{2\pi}{\cA}\nabla_k h_{11}\langle F^k,\vec{e}_3\rangle-K^2 \big)\\
&-\frac{2h_{22}|\nabla_1 h_{11}|^2+2h_{11}|\nabla_2 h_{11}|^2}{(h_{11})^2}+\frac{2\Lambda}{\eta}-\frac{8\pi|F|}{\cA\eta}\\
=&-\frac{2\nabla_1 h_{11}(h_{22}\nabla_1 h_{11}+h_{11}\nabla_1 h_{22})}{(h_{11})^2}+ \frac{1}{h_{11}}\big(- \frac{2\pi}{\cA}\nabla_k h_{11}\langle F^k,\vec{e}_3\rangle-K^2 \big)+\frac{2\Lambda}{\eta}-\frac{8\pi|F|}{\cA\eta}\;.
\end{align*}
However, \eqref{eq:OP gradient of K} and \eqref{eq:OP derivative of GCF} imply the following at $p_0$
\begin{align*}
h_{22}\nabla_1 h_{11}+h_{11}\nabla_1 h_{22}=\nabla_1 K = -\frac{2\pi}{\cA} h_{11}\langle F^1 ,\vec{e}_3 \rangle.
\end{align*}
Therefore, the last inequality can be reduced to
\begin{align*}
0 \geq   - \frac{2\pi}{\cA} \frac{\nabla_2 h_{11} }{h_{11}}\langle F^2,\vec{e}_3\rangle
+\frac{2\pi}{\cA} \frac{\nabla_1 h_{11} }{h_{11}}\langle F^1,\vec{e}_3\rangle-\frac{K^2}{h_{11}}+\frac{2\Lambda}{\eta}-\frac{8\pi|F|}{\cA\eta} \;.
\end{align*}
Observing $(h_{11})^{-1}\nabla_i h_{11}(p_0)=-\eta^{-1}\nabla_i \eta (p_0)$ by \eqref{eq:OP Gradient} and $\nabla w(p_0)=0$, we have 
\begin{align*}
0 \geq  \frac{2\pi}{\cA}\frac{ \nabla_2 \eta}{ \eta}\langle F^2,\vec{e}_3\rangle-\frac{2\pi}{\cA}\frac{ \nabla_1 \eta}{ \eta}\langle F^1,\vec{e}_3\rangle-\frac{ K^2}{h_{11}}+\frac{2\Lambda}{\eta}-\frac{8\pi|F|}{\cA\eta}.
\end{align*}
Applying  $h_{11}(p_0)=\Lambda(p_0)$ and  \eqref{eq:INT GCF} to the inequality above, we obtain 
\begin{align*}
0 \geq  \frac{4\pi}{\cA\eta}  \langle F_2 ,F \rangle \langle F^2,\vec{e}_3\rangle-\frac{4\pi}{\cA\eta}  \langle F_1 ,F \rangle \langle F^1,\vec{e}_3\rangle-\frac{4\pi^2|\langle \vec{n},\vec{e}_3\rangle|^2}{\cA^2\Lambda}+\frac{2\Lambda}{\eta}-\frac{8\pi |F|}{\cA\eta}\;.
\end{align*}
We next multiply by $\eta \Lambda$  the last inequality and apply $| \langle F_i ,F \rangle \langle F^i,\vec{e}_3\rangle |
\leq |F|$ and $ \langle \vec{n},\vec{e}_3 \rangle \leq 1$. Then, by also  using the definition $\eta \eqqcolon (|F|^2-R^2)$, we obtain
\begin{align*}
0 \geq  -\frac{16\pi}{\cA}|F|\Lambda-\frac{4\pi^2\eta}{\cA^2}+2\Lambda^2 \geq 2\Lambda^2-\frac{16\pi}{\cA}|F|\Lambda-\frac{4\pi^2}{\cA^2}|F|^2\;.
\end{align*}
Solving the quadratic inequality of $\Lambda$, we obtain an upper bound of $\Lambda$ at $p_0$,
\begin{align*}
\Lambda \leq \frac{(4+3\sqrt{2})\pi |F|}{\cA} \leq \frac{9\pi}{\cA}|F|.
\end{align*}
Therefore, multiplying by $\eta \leq |F|^2$ yields the desired result,
\begin{align*}
\eta\Lambda(p) \leq \eta\Lambda(p_0)  \leq \frac{9\pi}{\cA}|F|^3 (p_0) \leq \frac{9\pi}{\cA}\sup_{F(q)\in \Sigma_c} |F|^{3}(q).
\end{align*}
\end{proof}

\begin{lemma}\label{lemma:OP Inscribed radius}
Suppose that $\Sigma$ is a smooth strictly convex complete solution of \eqref{eq:INT GCF}, and the height $\langle F(p),\vec{e_3} \rangle$ attains this minimum at a point $p_0 \in M^2$. Given any point $p \in M^2$ with $|F(p)-F(p_0)| \gg \text{diam}(\Omega)$, there exists a ball $B_r(Z)$ such that the convex hull of $\Sigma$ contains $B_r(Z)$, $F(p) \in \bd B_r(Z) $, and \[r \geq   10^{-3}\mathcal{A}|F(p)-F(p_0)|^{-1} .\]
\end{lemma}

\begin{proof}
We set $R=2\text{diam}(\Omega)$ and $Y=F(p_0)+2\langle F(p)-F(p_0),e_2\rangle $, and apply Theorem \ref{thm:OP Optimal regularity}. Then, we have $H \leq 10^3\mathcal{A}^{-1}|F(p)-F(p_0)|$ in the ball $B_1(F(p))$. Hence, we can put a ball $B_r(Z)$ satisfying the desired conditions.
\end{proof}

\begin{lemma}\label{lemmma: OP Height bound by level set}
Given a point $\vec{x}_0 \in \Omega$ with $Du(\vec{x_0})\neq 0$, the height $u(\vec{x}_0)-\inf_\Omega u$  is bounded by $\text{diam}(\Omega)$, an upper estimate for $\cA(\Omega)$, and a lower estimate for the area of $\Omega^{\vec{x}_0}=\{\vec{x}\in \Omega: \langle \vec{x}-\vec{x}_0,Du(\vec{x}_0)\rangle \geq 0\}$.
\end{lemma}

\begin{proof}
We define $\overline \Omega^{\vec{x}_0}=\{\vec{x}\in\Omega:u(\vec{x}) \geq u(\vec{x}_0)\}$ and $\Gamma_\Omega^{\vec{x}_0}=\{\vec{x}\in\Omega:u(\vec{x}) = u(\vec{x}_0)\}$. Since $\Gamma_\Omega^{\vec{x}_0}$ is a convex curve, $ \Omega^{\vec{x}_0} \subset \overline \Omega^{\vec{x}_0}$ holds, namely $\cA(\Omega^{\vec{x}_0}) \leq \cA( \overline \Omega^{\vec{x}_0})$.

We define the normal map $\mathcal{N}:\Omega \to S^2$ by $\mathcal{N}=(-Du,1)(1+|Du|^2)^{-\frac{1}{2}}$. Then, by \eqref{eq:INT GCF} we have 
\begin{align*}
|\mathcal{N}(\overline \Omega^{\vec{x}_0})|=2\pi  \mathcal{A}(\overline \Omega^{\vec{x}_0})/\mathcal{A}(\Omega) \geq 2\pi  \mathcal{A}(\Omega^{\vec{x}_0})/\mathcal{A}(\Omega). 
\end{align*}
Therefore, the ratio of a lower estimate for $\mathcal{A}(\Omega^{\vec{x}_0})$ and an upper estimate for $\mathcal{A}(\Omega)$ yields an upper estimate for $D_{\vec{x}_0}=\inf\{|Du|(\vec{x}):\vec{x}\in {\Gamma_\Omega^{\vec{x}_0}}\}$. Therefore, the convexity of $u$ guarantees \[u(\vec{x}_0)-\inf_{\Omega} u \leq D_{\vec{x}_0} \cdot \text{diam}(\Omega).\] 
\end{proof}

We close this section with the following result which will be used in the proof of Theorem \ref{thm:DTF main thm}. 

\begin{theorem}\label{thm:OP Graph in region}
Suppose that a strictly convex and smooth solution $u(x,y)$ to \eqref{eq:INT GCFeq} satisfies $\frac{Du}{|Du|}=e_1$ at a point $(x_0,y_0) \in \Omega$. Then, given  a point $(x_0,\bar y)\in \bd\Omega$, the distance $|y_0-\bar y|$ is bounded below by some constant depending on $u(x_0,y_0)-\inf_\Omega u$ and $\text{diam}(\Omega)$. 
\end{theorem}

\begin{proof}
Lemma \ref{lemma:OP Inscribed radius} gives a ball $B_r(Z)$ such that $(x_0,y_0,u(x_0,y_0))\in \bd B_r(Z)$ holds and the convex hull of $\Sigma$ contains $B_r(Z)$. Moreover, the radius $r$ is bounded below by some constant depending on $u(x_0,y_0)-\inf_\Omega u$ and $\text{diam}(\Omega)$.  Since $\frac{Du}{|Du|}=e_1 $ holds at $(x_0,y_0)$, we have $\langle Z,e_2 \rangle=y_0$. Thus, the points $Z\pm r e_2 \in \bd B_r(Z)$ are contained in the convex hull of $\Sigma$. This yields the desired result.
\end{proof}

\section{Partial derivative bound}

%
%
%

Theorem \ref{thm:OP Graph in region} in the previous section implies that a portion of the boundary $\Omega$ is close enough to a straight segment $L=\{(x,b):a_1 \leq x \leq a_2\}$, then the gradient $Du$ can not parallel to $e_1$ in a neighborhood of $L_\epsilon=\{(x,b): a_1+\epsilon \leq x \leq a_2-\epsilon\}$. Namely, the surface $\Sigma$ can be considered as a graph with respect to the height vector $e_2$ a neighborhood of $L_\epsilon \times \mathbb{R}$. We will discuss this idea rigorously in the proof of the main theorem \ref{thm:DTF main thm}.

\medskip

In this section, we consider a domain $\Omega_y=\{(x,z):(x,y,z)\in \Sigma\}$ where a convex function $h:\Omega_y\to \mathbb{R}$ is defined by $(x,h(x,z),z) \in \Sigma$. Moreover, we may assume that the graph of $h(x,z)$ contains the portion $\{(x,y,z)\in \Sigma:-1\leq x\leq 1, -1\leq y\leq 0\}$ of $\Sigma$.
 
\medskip 
 
 We begin by deriving the equation of the function $h$.
\begin{equation}\label{eq:PGE GCFeq y-co}
\frac{\det D^2 h}{(1+|Dh|^2)^\frac{3}{2}}=K \, \big(1+|Dh|^2\big)^{\frac{1}{2}}=\frac{2\pi}{\cA(\Omega)}\langle \vec{e}_3, \vec{n} \rangle \big(1+|Dh|^2\big)^{\frac{1}{2}}=-\frac{2\pi}{\cA(\Omega)}h_z. \tag{4.1}
\end{equation}
The right hand side of the equation above can written as  $(2\pi/\cA) h_v \, \langle  -e_2   ,v \rangle$, where $v$ is the outward normal direction of the level set of $h$. Thus, the degenerate Monge-Ampere equation \eqref{eq:PGE GCFeq y-co} has two degenerating factors $h_v = |Dh|$ and $\langle -e_2 ,v \rangle$. In this section, we study the lower bound for $\langle -e_2 ,v \rangle$ which corresponds to the upper bound of $|\bd_x u|$,  the {\em partial derivative bound}. Notice that $|\bd_x u|$ is bounded even on the flat sides.

\medskip

To obtain  the lower bound for $\langle -e_2,v \rangle$, we will construct an one parameter family of  very wide but short supersolutions  
$\varphi_\alpha$ of equation \eqref{eq:PGE GCFeq y-co} with $\cA(\Omega)<\cR$ for some constant $\cR $. We will then cut  the graph of $\varphi_\alpha$ so that each of them each   contained in a narrow cylinder. 
By  sliding  $\varphi_\alpha$   along the $z$-axis we will   estimate the partial derivative $\bd_x u$ at a touching point which will lead to 
the  desired upper bound for $\bd_x u$ as stated in  Theorem \ref{thm:PGE Gradient bound for level set}.

\def\Aangle{87}
\def\Bangle{0}
\def\Cangle{0}
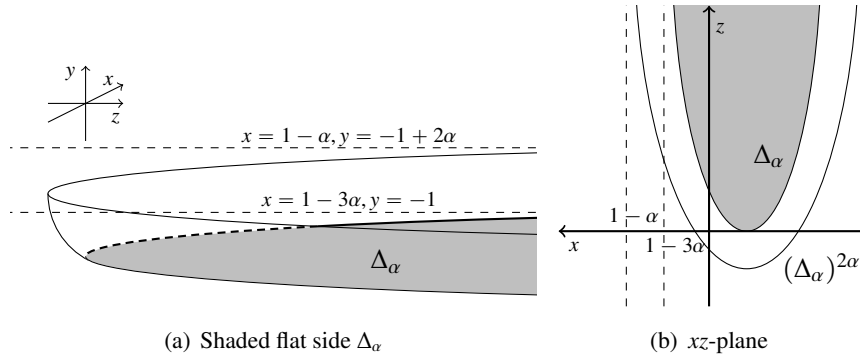
\begin{figure}[h]
\subfigure[Shaded flat side $\Delta_{\alpha}$]{
\begin{tikzpicture}\label{fig: Graph of supersolution}
[x=1cm,y=1cm] \clip(-12,-1.5) rectangle (-5,2.5);
\tikzset{Bplane/.estyle={cm={
cos(\Bangle),sin(\Bangle)*sin(\Aangle),sin(\Cangle)*sin(\Bangle),
cos(\Cangle)*cos(\Aangle)-sin(\Cangle)*cos(\Bangle)*sin(\Aangle),(0,-0.835)}}}
\tikzset{Tplane/.estyle={cm={
cos(\Bangle),sin(\Bangle)*sin(\Aangle),sin(\Cangle)*sin(\Bangle),
cos(\Cangle)*cos(\Aangle)-sin(\Cangle)*cos(\Bangle)*sin(\Aangle),(0,0)}}}
\draw (-11.5,0) arc (180:243:1cm);
\draw[Bplane,color=white,fill=lightgray] (0, 0) circle (11cm);
\draw[Tplane] circle (11.5cm);
\draw[Bplane,thick] (0, 11) arc (90:136:11cm);
\draw[Bplane,thick,densely dashed] (11*cos 136, 11*sin 136) arc (136:180:11cm);
\draw[Bplane] (-11, 0) arc (180:270:11cm);
\draw[->] (-11.5,1.2) -- (-10.5,1.2);
\draw[->] (-11.5,0.95) -- (-10.5,1.45);
\draw[->] (-11,0.7) -- (-11,1.7);
\draw[dashed] (-12.5,-0.25) -- (-2,-0.25);
\draw[dashed] (-12.5,0.61) -- (-2,0.61);
\begin{scriptsize}
\draw[color=black] (-7,-0.9) node[scale=1.3] {$\Delta_{\alpha}$};
\draw[color=black] (-7.5,-0.1) node {$x=1-3\alpha, y=-1$};
\draw[color=black] (-7.5,0.75) node {$x=1-\alpha, y=-1+2\alpha$};
\draw[color=black] (-10.7,1.5) node {$x$};
\draw[color=black] (-11.2,1.6) node {$y$};
\draw[color=black] (-10.6,1) node {$z$};
\end{scriptsize}
\end{tikzpicture}
}
\subfigure[$xz$-plane]{
\begin{tikzpicture}\label{fig: xz plane supersolution}
[x=1cm,y=1cm] \clip(-2.5,-5) rectangle (1.5,-1);
\draw[fill=lightgray] (0,0) ellipse (1cm and 4cm);
\draw(0,0) ellipse (1.5cm and 4.5cm);
\draw[thick,->] (2.5,-4)--(-2.5,-4);
\draw[thick,->] (-0.5,-5) -- (-0.5,-1);
\draw[dashed] (-1.1,-5) -- (-1.1,-0.5);
\draw[dashed] (-1.6,-5) -- (-1.6,-0.5);
\begin{scriptsize}
\draw[color=black] (0.3,-3) node[scale=1.3] {$\Delta_{\alpha}$};
\draw[color=black] (1,-4.5) node[scale=1.3] {$(\Delta_\alpha)^{2\alpha} $};
\draw[color=black] (-0.95,-4.2) node {$1-3\alpha$};
\draw[color=black] (-1.5,-3.8) node {$1-\alpha$};
\draw[color=black] (-2.3,-4.2) node {$x$};
\draw[color=black] (-0.35,-1.3) node {$z$};
\end{scriptsize}
\end{tikzpicture}
}
\caption{Supersolution $\varphi_\alpha$}\label{fig: Supersolution}
\end{figure}

\begin{definition}[Barrier construction]\label{def:PGE Flat Barrier}
Given a constant $\alpha \in (0,1/6)$, denote by  $\Delta_\alpha $ the convex set 

\begin{equation*}
\Delta_\alpha = \Bigg\{(x,z) \in \rtw: x \in \Big(-\frac{\cR}{4\alpha}+1-3\alpha,1-3\alpha\Big) ,\, z \geq -\frac{\cR}{4\alpha \pi} \log \cos \bigg(\frac{4\alpha \pi }{\cR}\Big(x-1+3\alpha+\frac{\cR}{8\alpha}\Big)\bigg) \Bigg\}.
\end{equation*}

We denote by $d_{\Delta_\alpha}(x,z)$ the distance function $d((x,z),\Delta_\alpha)$. In particular, if $(x,z) \in \Delta_\alpha$, then $d_{\Delta_\alpha}(x,z)=0$. By using $d_{\Delta_\alpha}(x,z)$, we define the $2\alpha$-extension $(\Delta_\alpha)^{2\alpha} $ of $\Delta_\alpha $ by
\begin{align*}
(\Delta_\alpha)^{2\alpha } = \{ (x,z) \in \rtw : d_{\Delta_\alpha}(x,z)\leq 2\alpha\}.
\end{align*}
Finally,   we define the  function $\varphi_\alpha:\text{cl}\big((\Delta_\alpha)^{2\alpha }\setminus \Delta_\alpha) \to \mathbb{R}$ by
\begin{equation*}
\varphi_\alpha(x,z) =-1+2\alpha - \sqrt{4\alpha^2 -d^2(\Delta_\alpha)(x,z)} \, .
\end{equation*}
This is all shown in Figure 3. 

\end{definition}

\begin{lemma}[Supersolution]\label{lemma:PGE Supersolution}
Given a constant $\alpha \in (0,1/6)$, the function $\varphi_\alpha$ in \rm{Definition} \ref{def:PGE Flat Barrier} is a convex function satisfying
\begin{equation*}
\frac{\det D^2\varphi}{(1+|D\varphi|^2)^{\frac{3}{2}}}\leq -\frac{2\pi}{\cR}\varphi_z \, .
\end{equation*}
\end{lemma}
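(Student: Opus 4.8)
The plan is to reduce everything to a one–variable computation by exploiting that on the open collar $C:=\operatorname{Int}\big((\Delta_\alpha)^{2\alpha}\setminus\Delta_\alpha\big)$ the function $\varphi_\alpha$ depends only on the distance $\delta:=d_{\Delta_\alpha}$ to the convex set $\Delta_\alpha$, namely $\varphi_\alpha=f(\delta)$ with $f(t)=-1+2\alpha-\sqrt{4\alpha^2-t^2}$; here $f$ is increasing and convex on $[0,2\alpha]$, with $f(0)=-1$ and $f'(0)=0$. First I would set up Fermi coordinates along the boundary curve $\partial\Delta_\alpha$: with $p$ the nearest point of $\partial\Delta_\alpha$ and $\nu(p),\tau(p)$ its outward unit normal and unit tangent, every point of $C$ is uniquely $p+\delta\,\nu(p)$, $\delta\in(0,2\alpha)$ (the normal exponential map of a convex curve has no focal points on its outer side, so this is a diffeomorphism). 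In the orthonormal frame $\{\nu,\tau\}$ one has $\nabla\delta=\nu$ and $D^2\delta=\operatorname{diag}(0,\kappa_\delta)$, where $\kappa_\delta=\kappa/(1+\delta\kappa)$ is the curvature of the level curve $\{\delta=\text{const}\}$ and $\kappa\ge0$ the curvature of $\partial\Delta_\alpha$ at $p$. Hence $D^2\varphi_\alpha=\operatorname{diag}\big(f''(\delta),\,f'(\delta)\kappa_\delta\big)$, which is positive semidefinite because $f''>0$, $f'\ge0$ and $\kappa_\delta\ge0$; this yields the convexity statement (the extension of $\varphi_\alpha$ by the constant $-1$ on $\Delta_\alpha$ is genuinely convex on the convex set $(\Delta_\alpha)^{2\alpha}$, matching $C^1$ across $\partial\Delta_\alpha$ since $f'(0)=0$).

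Next I would substitute $f'(t)=t(4\alpha^2-t^2)^{-1/2}$ and $f''(t)=4\alpha^2(4\alpha^2-t^2)^{-3/2}$, so that $\det D^2\varphi_\alpha=f''(\delta)f'(\delta)\kappa_\delta$ and $1+|D\varphi_\alpha|^2=1+f'(\delta)^2=4\alpha^2(4\alpha^2-\delta^2)^{-1}$; a short simplification gives
\[
\frac{\det D^2\varphi_\alpha}{\big(1+|D\varphi_\alpha|^2\big)^{3/2}}=\frac{\delta\,\kappa_\delta}{2\alpha\sqrt{4\alpha^2-\delta^2}}.
\]
For the right-hand side, $\varphi_z=f'(\delta)\,\partial_z\delta=f'(\delta)\langle\nu,e_2\rangle$ (the normal $\nu$ is constant along each normal line), and $\langle\nu,e_2\rangle\le0$ because $\Delta_\alpha=\{z\ge g(x)\}$ lies above its boundary graph; hence $-\tfrac{\pi}{4}\varphi_z=\tfrac{\pi}{4}\,\delta\,(-\langle\nu,e_2\rangle)(4\alpha^2-\delta^2)^{-1/2}\ge0$, so $\varphi_\alpha$ is at least a supersolution of a degenerate-elliptic equation of the right sign. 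Cancelling the common positive factor $\delta(4\alpha^2-\delta^2)^{-1/2}$ (both sides vanish at $\delta=0$), the claimed inequality becomes equivalent to $\kappa_\delta\le\tfrac{\pi\alpha}{2}\,(-\langle\nu,e_2\rangle)$.

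Finally, since $\kappa_\delta=\kappa/(1+\delta\kappa)\le\kappa$, it is enough to verify $\kappa\le\tfrac{\pi\alpha}{2}(-\langle\nu,e_2\rangle)$ on $\partial\Delta_\alpha$ itself. Writing $\partial\Delta_\alpha$ as the graph $z=g(x)=-\tfrac{2}{\alpha\pi}\log\cos\theta$ with $\theta:=\tfrac{\alpha\pi}{2}(x-1+3\alpha+\tfrac1\alpha)\in(-\tfrac\pi2,\tfrac\pi2)$, one computes $g'=\tan\theta$, $g''=\tfrac{\alpha\pi}{2}\sec^2\theta$, hence $\kappa=g''(1+g'^2)^{-3/2}=\tfrac{\alpha\pi}{2}\cos\theta$, while the downward outward unit normal is $\nu=(g',-1)(1+g'^2)^{-1/2}=(\sin\theta,-\cos\theta)$, so $-\langle\nu,e_2\rangle=\cos\theta$. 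Thus in fact $\kappa=\tfrac{\pi\alpha}{2}(-\langle\nu,e_2\rangle)$ \emph{with equality} on $\partial\Delta_\alpha$, and the desired inequality for $\varphi_\alpha$ follows, with equality precisely on $\{\delta=0\}$. The identity $\kappa=\tfrac{\pi\alpha}{2}(-\langle\nu,e_2\rangle)$ is no accident: $g$ is a rescaled grim reaper, i.e. a translator for curve shortening flow, and the radius-$2\alpha$ tube over it is the matching supersolution of the horizontal translator equation \eqref{eq:PGE GCFeq y-co} with $\cA=8$. The only genuinely delicate points in carrying this out are the sign and normalization bookkeeping in $D^2\delta$ and the parallel-curve identity $\kappa_\delta=\kappa/(1+\delta\kappa)$, together with checking that the Fermi coordinates cover all of $C$; once the explicit forms of $g$, $\kappa$ and $\nu$ are matched so that the constants align, the rest is the computation above.
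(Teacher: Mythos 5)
Your proposal is correct and follows essentially the same route as the paper: reduce to normal/tangential coordinates for the distance function $d_{\Delta_\alpha}$, compute $\det D^2\varphi_\alpha/(1+|D\varphi_\alpha|^2)^{3/2}=\tfrac{1}{2\alpha}\varphi_v\,\kappa$, and close the argument with the grim-reaper identity $\kappa=\tfrac{\pi\alpha}{2}(-\langle \nu,e_2\rangle)$ on $\bd\Delta_\alpha$ together with the monotonicity of curvature along outward parallel curves. The only cosmetic difference is that you invoke the explicit parallel-curve formula $\kappa_\delta=\kappa/(1+\delta\kappa)$ where the paper simply cites convexity for $\kappa(p)\le\kappa(p_0)$; both yield the same comparison.
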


\begin{proof}
For convenience, we let $\varphi$ and $d$ denote $\varphi_\alpha$ and $d_{\Delta_\alpha}$  respectively. For each point $p\in \rtw$ with $d(p)>0$, we denote by $\tau (p)$ and $v(p)$ the tangential and the normal direction of a level set  $L_{d(p)}(d)$ of the distance function $d$ satisfying $\langle \tau ,e_1 \rangle  \geq 0$ and $\langle v ,e_2 \rangle  \leq 0$,  respectively. Then, we have
\begin{align*}\label{eq:PGE Dd}
& Dd=v, && d_v =|Dd|=1, && d_\tau=0.\tag{4.2}
\end{align*}
We observe $v(p)=v(p+\epsilon v(p))$ for all $\epsilon \in \mathbb{R}$ with $d(p+\epsilon v(p))>0$, which implies
\begin{align*}\label{eq:PGE d_vv}
d_{vv}=d_{v\tau}=0. \tag{4.3}
\end{align*}
To derive $d_{\tau\tau}(p)$, given a point $p_0$, we consider the immersion $\gamma:\mathbb{R}\to \rtw$ satisfying $d(\gamma(s))=d(p_0)$ with $\gamma(0)=p_0$, where $s$ is the arc length parameter of the level set, $\gamma(\mathbb{R})=L_{d(p_0)}(d)$. By differentiating $d(\gamma(s))=d(p_0)$ twice with respect to $s$, we obtain
\begin{align*}
\langle \gamma_s,(D^2 d) \gamma_s \rangle +\langle Dd, \gamma_{ss} \rangle =0 
\end{align*}
We can observe that $\gamma_s(0)=\tau(p_0)$ and $\gamma_{ss}(0)=-\kappa(p_0)v(p_0)$, where $\kappa(p)>0$ is the curvature of $L_{d(p)}(d)$ at $p$. Hence, $Dd(\gamma(0))=v(p_0)$ implies $\langle \tau,(D^2 d) \tau \rangle +\langle -\kappa v, v \rangle =0$ at $p_0$. Thus,
\begin{align*}\label{eq:PGE d_tt}
d_{\tau\tau}(p)=\kappa(p). \tag{4.4}
\end{align*}
Hence we  can directly derive from \eqref{eq:PGE Dd}. \eqref{eq:PGE d_vv}  and \eqref{eq:PGE d_tt} the following 
holding at each $p \in \text{cl}\big((\Delta_\alpha)^{2\alpha}\setminus \Delta_\alpha)$
\begin{align*}
& \varphi_v = d(4\alpha^2-d^2)^{-\frac{1}{2}}, && \varphi_\tau=0, &&\varphi_{vv} = 4\alpha^2(4\alpha^2-d^2)^{-\frac{3}{2}} , && \varphi_{v\tau}=0, && \varphi_{\tau\tau}=\kappa \,  \varphi_v . 
\end{align*}
Therefore, $\varphi$ is a convex function.

Next, combining the equalities above yields
\begin{align*}\label{eq:PGE varphi eq}
\frac{\det D^2 \varphi}{(1+|D\varphi|^2)^{\frac{3}{2}}}=\frac{\varphi_{vv}\varphi_{\tau\tau}}{(1+\varphi_v^2)^{\frac{3}{2}}}=\frac{1}{2\alpha}\varphi_{\tau\tau}=\frac{1}{2\alpha} \, \kappa \,  \varphi_v .\tag{4.5}
\end{align*}
Now, we consider the point $p_0=p-d(p) v(p) \in \bd\Delta_\alpha$. Then, the convexity of $\bd\Delta_\alpha$ leads to
\begin{align*}\label{eq:PGE curvature comparison}
\kappa(p) \leq  \kappa(p_0).\tag{4.6}
\end{align*}
We recall that the Grim Reaper curve $\bd\Delta_\alpha$ is the graph of the convex function $f_\alpha(x)$ defined by

\begin{align*}\label{eq:PGE Grim reaper curve f_a}
f_\alpha(x)=-\frac{\cR}{4\pi\alpha} \log \cos \bigg(\frac{4\alpha \pi }{\cR}\Big(x-1+3\alpha+\frac{\cR}{8\alpha}\Big)\bigg).\tag{4.7}
\end{align*}

Hence, at $x_0$ with $p_0=(x_0,f_\alpha(x_0))$, the following holds

\begin{align*}
\kappa(p_0)=\frac{f_\alpha''(x_0)}{(1+|f_\alpha'(x_0)|^2)^{\frac{3}{2}}}=\frac{4\pi \alpha /\cR}{(1+|f_\alpha'(x_0)|^2)^{\frac{1}{2}}}=-\frac{4\pi \alpha}{\cR} \langle v(p_0),e_2 \rangle. 
\end{align*}

Thus, $v(p_0)=v(p-d(p) v(p))=v(p)$  implies

\begin{align*}\label{eq:PGE Level set inequality}
\kappa(p)\leq \kappa(p_0)=-\frac{4\pi \alpha}{\cR} \langle v(p_0),e_2 \rangle=-\frac{4\pi \alpha}{\cR} \langle v(p),e_2 \rangle.\tag{4.8}
\end{align*} 

Therefore,  given a point $p \in \text{cl}\big((\Delta_\alpha)^{2\alpha}\setminus \Delta_\alpha)$, \eqref{eq:PGE varphi eq}, \eqref{eq:PGE curvature comparison}, and \eqref{eq:PGE Level set inequality} give the desired result
\begin{align*}
\frac{\det D^2 \varphi}{(1+|D\varphi|^2)^{\frac{3}{2}}}=\frac{1}{2\alpha }\varphi_v \kappa \leq -\frac{2\pi}{\cR}\varphi_v \langle v,e_2 \rangle=-\frac{2\pi}{\cR} \, \big(\varphi_v \langle v,e_2 \rangle+\varphi_\tau \langle \tau,e_2 \rangle \big)=-\frac{2\pi}{\cR}\varphi_z \,.
\end{align*}
\end{proof}

\begin{theorem}[Partial derivative bound]\label{thm:PGE Gradient bound for level set}
Let $\Omega$ be an open  strictly convex smooth  subset of\, $\rtw$ such that $\mathcal{A}(\Omega) <\cR$ for some constant $\cR $ and $[-1,1]\times[-1,0]\subset \Omega$. In addition, a corresponding solution $\Sigma$ to \eqref{eq:INT GCF} is a convex graph with respect to the height vector $e_2$ in $[-1,1]\times [-1,0]\times \mathbb{R}$. Then, the solution $u :\Omega  \to \mathbb{R}$ to \eqref{eq:INT GCFeq} defining $\Sigma$  satisfies
\begin{align*}
\bd_x u  (1-5\alpha,-1+\epsilon) \leq 2+3\cot \frac{4\pi\alpha^2}{\cR},
\end{align*}
for all $\epsilon \in (0,1/2)$ and $ \alpha \in (0,1/6)$.
\end{theorem}

\begin{proof}

To construct a barrier $\Phi_{\epsilon,\alpha}^{t_{\epsilon,\alpha}}$, we will cut the graph of $\epsilon+\varphi_\alpha$ by $\{1-4\alpha\} \times \rtw$ (the blue section in Figure \ref{fig:PGE Barrier Construction}) and slide it along $z$-direction until it touches $\Sigma$ at a point $P_0$. We will show that the contact point $P_0$ is contained in $\{1-4\alpha\} \times \rtw$, namely $P_0$ is a point on the front part of the boundary $\bd\Phi_{\epsilon,\alpha}^{t_{\epsilon,\alpha}}$ of the barrier. (See the blue curve $\Gamma_F$ in Figure \ref{fig:PGE Barrier Construction}). Then, we will estimate the partial derivative $\bd_x u$ at $P_0$ by comparing with the barrier $\Phi_{\epsilon,\alpha}^{t_{\epsilon,\alpha}}$ at $P_0$. After obtaining the  bound on $\bd_x u$ at $P_0$, we will use the convexity of the solution $\Sigma$ the barrier $\Phi_{\epsilon,\alpha}^{t_{\epsilon,\alpha}}$ to show the desired bound of $\bd_x u$ at $(1-5\alpha,-1+\epsilon)$.

\def\Aangle{87}
\def\Bangle{0}
\def\Cangle{0}
\begin{figure}[h]
\subfigure[Blue section cutting the supersolution $\varphi_\alpha$]{
\begin{tikzpicture}\label{fig:PGE Barrier Construction}
[x=1cm,y=1cm] \clip(-12,-2) rectangle (-6,2);
\shade[top color=blue!30!,bottom color=blue!10!] (-12,2.57+3* sin 230) rectangle (-6,2.57+3* sin 270);
\draw[thick,densely dashed] (-11.5,0) arc (180:243:1cm);
\tikzset{Bplane/.estyle={cm={
cos(\Bangle),sin(\Bangle)*sin(\Aangle),sin(\Cangle)*sin(\Bangle),
cos(\Cangle)*cos(\Aangle)-sin(\Cangle)*cos(\Bangle)*sin(\Aangle),(0,-0.835)}}}
\tikzset{Tplane/.estyle={cm={
cos(\Bangle),sin(\Bangle)*sin(\Aangle),sin(\Cangle)*sin(\Bangle),
cos(\Cangle)*cos(\Aangle)-sin(\Cangle)*cos(\Bangle)*sin(\Aangle),(0,0)}}}
\draw[Bplane,color=white,fill=lightgray] (0, 0) circle (11cm);
\tikzset{Mplane/.estyle={cm={
cos(\Bangle),sin(\Bangle)*sin(\Aangle),sin(\Cangle)*sin(\Bangle),
cos(\Cangle)*cos(\Aangle)-sin(\Cangle)*cos(\Bangle)*sin(\Aangle),(0,-0.3)}}}
\draw[Bplane,color=white,fill=lightgray] (0, 0) circle (11cm);
\draw[line width=0.04cm,color=blue] (-8,-0.43) arc (270:230:3cm);
\draw[Bplane,line width=0.04cm] (0,11) arc (90:137:11cm);
\draw[Mplane,thick,color=red] (0,11) arc (90:150:11cm);
\draw[Bplane,thick,densely dashed] (11*cos 137, 11*sin 137) arc (137:270:11cm);
\draw[Tplane,thick] (0,11) arc (90:150:11.5cm);
\draw[Tplane,thick,densely dashed] (11.5*cos 150, 11.5*sin 150) arc (150:200:11.5cm);
\draw[->] (-11.5,1.2) -- (-10.5,1.2);
\draw[->] (-11.5,0.95) -- (-10.5,1.45);
\draw[->] (-11,0.7) -- (-11,1.7);
\draw[color=red,fill=red] (-8+3*cos 240,2.57+3* sin 240) circle (0.06cm);
\draw[fill=black] (-8+3*cos 270,2.57+3* sin 270) circle (0.06cm);
\draw[fill=black] (-8+3*cos 230,2.57+3* sin 230) circle (0.06cm);
\draw[color=blue,dashed] (-12,2.57+3* sin 230) -- (-6,2.57+3* sin 230);
\draw[color=blue,dashed] (-12,2.57+3* sin 270) -- (-6,2.57+3* sin 270);
\begin{scriptsize}
\draw[color=black] (-10,0.5) node[scale=1.3] {$P_1$};
\draw[color=black] (-9.8,-0.1) node[scale=1.3] {$P_0$};
\draw[color=black] (-8,-0.7) node[scale=1.3] {$P_2$};
\draw[color=black] (-7,0.7) node[scale=1.3] {$\Gamma_T$};
\draw[color=black] (-6.5,-0.15) node[scale=1.3] {$\Gamma_B$};
\draw[color=blue] (-8.5,-0.2) node[scale=1.3] {$\Gamma_F$};
\draw[color=red] (-7.5,-0.1) node {$y=y_0$};
\draw[color=blue] (-11.2,0.4) node {$x=1-4\alpha$};
\draw[color=black] (-10.7,1.5) node {$x$};
\draw[color=black] (-11.2,1.6) node {$y$};
\draw[color=black] (-10.6,1) node {$z$};
\end{scriptsize}
\end{tikzpicture}
}
\subfigure[$xz$-plane]{
\begin{tikzpicture}\label{fig:PGE Barrier xz plane}
[x=1cm,y=1cm] \clip(-5,-6) rectangle (-1,-2);
\draw[fill=gray!15!] (-2.7*1.2,-3.93*1.2) -- (-2.7*1.2+2*0.0841,-3.93*1.2+2*0.0541)--(-2.7*1.2+2*0.03,-3.93*1.2+2*0.1382)--(-2.7*1.2-2*0.0541,-3.93*1.2+2*0.0841)--(-2.7*1.2,-3.93*1.2);
\draw[fill=gray!15!] (-2.25*1.2,-3.64*1.2) -- (-2.25*1.2-2*0.0841,-3.64*1.2-2*0.0541)--(-2.25*1.2-2*0.1382,-3.64*1.2+2*0.03)--(-2.25*1.2-2*0.0541,-3.64*1.2+2*0.0841)--(-2.25*1.2,-3.64*1.2);
\draw[dashed, fill=lightgray] (0,0) ellipse (3*1.2cm and 5.5*1.2cm);
\draw[->] (-1.1,-2.4-0.3)--(-2.1,-2.4-0.3);
\draw[->] (-1.6,-2.9-0.3) -- (-1.6,-1.9-0.3);
\draw[color=blue,dashed] (-2.7*1.2,-5*1.2) -- (-2.7*1.2,-0.5*1.2);
\draw[color=blue,line width=0.04cm] (-2.7*1.2,-4.55*1.2) --(-2.7*1.2,-2.35*1.2);
\draw[dashed] (0,0) ellipse (4*1.2cm and 6.2*1.2cm);
\draw[fill=black] (-2.7*1.2,-4.55*1.2) circle (0.06cm);
\draw[fill=black] (-2.7*1.2,-2.35*1.2) circle (0.06cm);
\draw[color=red,fill=red] (-2.7*1.2,-3.93*1.2) circle (0.06cm);
\draw (-2.7*1.2,-3.93*1.2) -- (-2.25*1.2,-3.64*1.2);
\draw[fill=black] (-2.25*1.2,-3.64*1.2) circle (0.06cm);
\begin{scriptsize}
\draw[color=black] (-2.1,-3.64*1.2) node[scale=1.3] {$(\bar x_0, \bar z_0)$};
\draw[color=black] (-2.7*1.2+0.3,-2.35*1.2) node[scale=1.3] {$P_2$};
\draw[color=black] (-2.7*1.2+0.3,-4.55*1.2) node[scale=1.3] {$P_1$};
\draw[color=blue] (-2.7*1.2-0.8,-4.55*1.2-0.2) node {$x=1-4\alpha$};
\draw[color=black] (-2.7*1.2+0.3,-3.93*1.2-0.2) node[scale=1.3] {$P_0$};
\draw[color=black] (-2.7*1.2-0.9,-4.55*1.2+1) node[scale=1.3] {$\Gamma_T$};
\draw[color=black] (-2.7*1.2-0.4,-2.35*1.2+0.4) node[scale=1.3] {$\Gamma_B$};
\draw[color=blue] (-2.7*1.2-0.2,-3.5) node[scale=1.3] {$\Gamma_F$};
\draw[color=red] (-2.7*1.2-0.5,-4.15) node {$ f^{\, 0}_{\epsilon,\alpha}$};
\draw[color=black] (-2,-5) node {$f_{\epsilon,\alpha}$};
\draw[color=black] (-2.05,-2.9) node {$x$};
\draw[color=black] (-1.75,-2.3) node {$z$};
\end{scriptsize}
[x=1cm,y=1cm] \clip(-5,-6) rectangle (-2.7*1.2,-2);
\draw[thick] (0,0) ellipse (4*1.2cm and 6.2*1.2cm);
\draw[thick] (0,0) ellipse (3*1.2cm and 5.5*1.2cm);
\draw[red,thick] (0,0) ellipse (3.6*1.2cm and 6*1.2cm);
\end{tikzpicture}
}
\caption{Sliding barrier}\label{fig:PGE Barrier}
\end{figure}
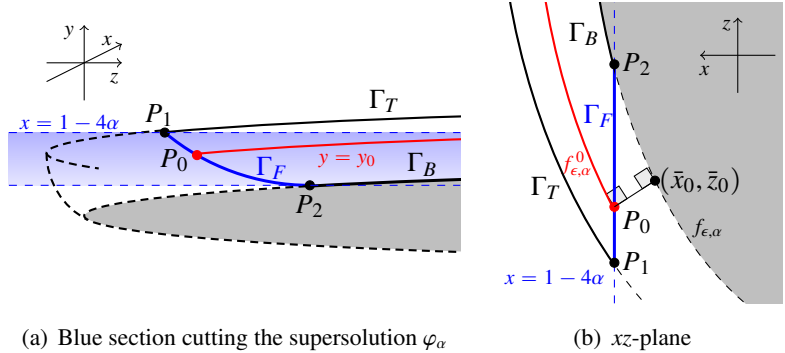

\noindent\textit{Step 1 : Sliding barrier construction.} We denote by $\Phi_\alpha$ the graph of $\varphi_\alpha$ in $[1-4\alpha,+\infty) \times \rtw$,
\begin{align*}
\Phi_{\alpha} = \big\{ (x,\varphi_\alpha(x,z),z):(x,z) \in \text{cl}\big((\Delta_\alpha)^{2\alpha}\setminus \Delta_\alpha),  \; x\geq 1-4\alpha \big\}.
\end{align*}
Then, given constants $\epsilon \in (0,1/2)$ and $t \in \mathbb{R}$, we translate $\Phi_\alpha$ by $\epsilon\vec{e}_2+t\vec{e}_3$,
\begin{align*}
\Phi_{\epsilon,\alpha}^t = \big\{ (x,y+\epsilon,z+t):(x,y,z) \in \Phi_\alpha\}.
\end{align*}
Notice that	definition of $\varphi_\alpha$ guarantees
\begin{align*}
\Phi_{\epsilon,\alpha}^t \subset [1-4\alpha,1-\alpha] \times [-1+\epsilon,-1+\epsilon+2\alpha]\times [-t-2\alpha ,+\infty).
\end{align*}
Hence, there exists a constant $t_{\epsilon,\alpha} \in \mathbb{R}$ and a point $P_0=(x_0,y_0,z_0) \in  \rth$ satisfying
\begin{align*}\label{eq:PGE Contact point}
&\Sigma  \bigcap \Phi_{\epsilon,\alpha}^t =\emptyset\quad \text{for}\; t > t_{\epsilon,\alpha}, && P_0\in \Sigma \bigcap \Phi_{\epsilon,\alpha}^{t_{\epsilon,\alpha}}. \tag{4.9}
\end{align*}

We denote by $\Delta_{\epsilon,\alpha}$ the projection of $\Phi_{\epsilon,\alpha}^{t_{\epsilon,\alpha}}$ into the $xz$-plane, and consider $\Phi_{\epsilon,\alpha}^{t_{\epsilon,\alpha}}$ as the graph of a function $\varphi_{\epsilon,\alpha}:\Delta_{\epsilon,\alpha}\to \mathbb{R}$
\begin{align*}
&\Delta_{\epsilon,\alpha}  =\{(x,z):(x,y,z)\in \Phi_{\epsilon,\alpha}^{t_{\epsilon,\alpha}}\}, &&
\Phi_{\epsilon,\alpha}^{t_{\epsilon,\alpha}}  =\{ (x,\varphi_{\epsilon,\alpha}(x,z),z):(x,z)\in \Delta_{\epsilon,\alpha}\}.
\end{align*}

\medskip

\noindent \textit{Step 2 : Position of the contact point $P_0$.} In this step, we will show that the contact point $P_0$ is contained in the front part $L^{x}_{1-4\alpha} (\Phi_{\epsilon,\alpha}^{t_{\epsilon,\alpha}})$ of the boundary $\bd\Phi_{\epsilon,\alpha}^{t_{\epsilon,\alpha}}$. 

First of all, the contact point $P_0$ can not be an interior point of $\Phi_{\epsilon,\alpha}^{t_{\epsilon,\alpha}}$, because $\varphi_{\epsilon,\alpha}$ is a supersolution. Thus, $P_0$ is a point on the boundary $\bd\Phi_{\epsilon,\alpha}^{t_{\epsilon,\alpha}}$ of $\Phi_{\epsilon,\alpha}^{t_{\epsilon,\alpha}}$. We observe that the boundary $\bd\Phi_{\epsilon,\alpha}^{t_{\epsilon,\alpha}}$ can be decomposed into the top $\Gamma_T$, bottom $\Gamma_B$, and front $\Gamma_F$ boundary as following
\begin{align*}\label{eq:PGE Boundary decomposition}
&\bd\Phi_{\epsilon,\alpha}^{t_{\epsilon,\alpha}}=\Gamma_T \bigcup \Gamma_B \bigcup \Gamma_F, && \Gamma_T=L^{y}_{-1+\epsilon+2\alpha} (\Phi_{\epsilon,\alpha}^{t_{\epsilon,\alpha}}), && \Gamma_B=  L^{y}_{-1+\epsilon} (\Phi_{\epsilon,\alpha}^{t_{\epsilon,\alpha}}), && \Gamma_F= L^{x}_{1-4\alpha} (\Phi_{\epsilon,\alpha}^{t_{\epsilon,\alpha}}).\tag{4.10}
\end{align*}
We denote by $P_1$ and $P_2$ the end point of the top $\Gamma_T$ and the bottom $\Gamma_B$ boundary, respectively
\begin{align*}\label{eq:PGE P_1 & P_2}
 P_1&=(x_1,y_1,z_1) = \Gamma_T \bigcap \Gamma_F=(1-4\alpha,-1+\epsilon+2\alpha,z_1) , \tag{4.11}\\
 P_2 &=(x_2,y_2,z_2) = \Gamma_B \bigcap \Gamma_F=(1-4\alpha,-1+\epsilon,z_2). 
\end{align*}

On the other hand \eqref{eq:PGE Contact point} gives $\text{cl}(\Delta_{\epsilon,\alpha})\eqqcolon\Delta_{\epsilon,\alpha}  \subset \text{Int}(\Omega_y)$ and $h(x,z)\leq \varphi_{\epsilon,\alpha}(x,z)$ on $\Delta_{\epsilon,\alpha}$. Also, we have $|D \varphi_{\epsilon,\alpha}|= +\infty$ on $\Gamma_T$. Hence, if $P_0 \in \big(\Gamma_T \setminus \{P_1\} \big)$, then $|Dh|=+\infty$ holds at $P_0$ by $h\leq \varphi_{\epsilon,\alpha}$, which contradicts to $\text{cl}(\Delta_{\epsilon,\alpha}) \subset \text{Int}(\Omega_y)$. Thus,
\begin{align*}
P_0 \not \in \big(\Gamma_T \setminus \{P_1\} \big)
\end{align*}
Moreover, we have $|D \varphi_{\epsilon,\alpha}|= 0$ on $\Gamma_B$. Thus, if $P_0 \in \big(\Gamma_B \setminus \{P_2\} \big)$, then $|Dh|=0$ holds at $P_0$. However, $\Sigma$ is a strictly convex complete surface, which means $|Dh| \neq 0$. Therefore,
\begin{align*}
P_0 \not \in \big(\Gamma_B \setminus \{P_2\} \big)
\end{align*}
Hence, by \eqref{eq:PGE Boundary decomposition} and \eqref{eq:PGE P_1 & P_2},  $P_0$ is a point on the front boundary $\Gamma_F$
\begin{align*}\label{eq:PGE x_0 1-4a}
P_0=(x_0,y_0,z_0)=(1-4\alpha,y_0,z_0) \in \Gamma_F. \tag{4.12}
\end{align*}

\medskip

\noindent \textit{Step 3 : Distance between $P_0$ and $P_2$.} In this step, we will estimate $z_2-z_0$ in terms of $\alpha$.

We recall that the Grim reaper curve $\bd\Delta_{\alpha}$ is the graph of the function $f_\alpha (x)$ defined by \eqref{eq:PGE Grim reaper curve f_a} and $\Delta_{\epsilon,\alpha}$ is a subset of $\overline{\Delta}_{\epsilon,\alpha}\eqqcolon \Delta_{\alpha}+t_{\epsilon,\alpha}e_{z}$. Hence, $\bd\overline{\Delta}_{\epsilon,\alpha}$ is the graph of the function $f_{\epsilon,\alpha}$ defined by
\begin{align*}\label{eq:PGE f_ep,a}
f_{\epsilon,\alpha}(x)=t_{\epsilon,\alpha}+f_\alpha(x)=t_{\epsilon,\alpha}-\frac{\cR}{4\pi\alpha} \log \cos \bigg(\frac{4\alpha \pi }{\cR}\Big(x-1+3\alpha+\frac{\cR}{2\alpha}\Big)\bigg).\tag{4.13}
\end{align*}

By definition of $\varphi_\alpha$, there exists a unique point $(\bar x_0, \bar z_0) \in \bd\Delta_{\epsilon,\alpha}$ such that  
\begin{align*}\label{eq:PGE d(P_0,bar P_0)}
d((x_0,z_0) ,\overline{\Delta}_{\epsilon,\alpha})=d((x_0,z_0),(\bar x_0, \bar z_0)) \leq 2\alpha . \tag{4.14} 
\end{align*}
We know $x_0=x_2=1-4\alpha$ by \eqref{eq:PGE P_1 & P_2} and \eqref{eq:PGE x_0 1-4a}. Hence, for all $x \in [\bar x_0,x_2]$, we can derive from \eqref{eq:PGE f_ep,a} the following inequality 
\begin{align*}\label{eq:PGE f'(x_2)}
f'_{\epsilon,\alpha}(\bar x_0) \leq f'_{\epsilon,\alpha}(x) \leq f'_{\epsilon,\alpha}(x_2) =\tan \bigg(\frac{4\pi\alpha}{\cR}\Big(x_0-1+3\alpha+\frac{\cR}{2\alpha}\Big)\bigg)=\cot \frac{4\pi\alpha^2}{\cR} \,. \tag{4.15}
\end{align*}

Therefore, combining \eqref{eq:PGE d(P_0,bar P_0)} and \eqref{eq:PGE f'(x_2)} yields
\begin{align*}\label{eq:PGE z_2-z_0}
z_2-z_0 & \leq (\bar z_0-z_0) +(z_2- \bar z_0)\leq  2\alpha +(z_6- \bar z_0) \leq 2\alpha+ \int^{x_2}_{\bar x_0} f'_{\epsilon,\alpha}(x) dx  \leq  2\alpha + 2 \alpha \cot \frac{4\pi\alpha^2}{\cR} \, . \tag{4.16}
\end{align*}

\begin{figure}[h]
\begin{tikzpicture}\label{fig:PGE Level sets}
[x=1cm,y=1cm] \clip(-6,-1) rectangle (6,3);
\draw[color=blue,line width=0.04cm] (-2,0.5) -- (-2,2.5) ;
\draw[color=blue,dashed] (-2,-1) -- (-2,3) ;
\draw (0,0) parabola (6,9);
\draw (0,0) parabola (-6,9);
\draw[color=red,thick] plot [domain=-4:-2] (\x,{(0.375*(\x)^2 -0.5)});
\draw[dashed] plot [domain=-4:0] (\x,{-1.5*\x-2});
\draw (0,0.5) parabola (6,12.5);
\draw (0,0.5) parabola (-6,12.5);
\draw[fill=black] (-1,0.833) circle  (0.06 cm);
\draw[fill=black] (-2,1.833) circle  (0.06 cm);
\draw[fill=black] (-1,0.25) circle  (0.06 cm);
\draw[fill=black] (-2,2.5) circle  (0.06 cm);
\draw[fill=black] (-2,0.5) circle  (0.06 cm);
\draw[dashed] (-1,-1) -- (-1,3) ;
\draw[color=red,fill=red] (-2,1) circle  (0.06 cm);
\draw[->] (-4,0.5) -- (-4,1.5);
\draw[->] (-3.5,1) -- (-4.5,1);
\begin{scriptsize}
\draw[color=black] (-0.7,0.9) node[scale=1.3] {$z_4$};
\draw[color=black] (-0.7,0.3) node[scale=1.3] {$z_5$};
\draw[color=black] (-0.3,2) node {$x=1-5\alpha$};
\draw[color=blue] (-2.7,-0.4) node {$x=1-4\alpha$};
\draw[color=black] (-2.3,0.4) node[scale=1.3] {$z_1$};
\draw[color=black] (-2.3,0.9) node[scale=1.3] {$z_0$};
\draw[color=black] (-1.7,1.9) node[scale=1.3] {$z_6$};
\draw[color=black] (-1.7,2.6) node[scale=1.3] {$z_2$};
\draw[color=black] (-3.8,1.4) node {$z$};
\draw[color=black] (-4.4,0.8) node {$x$};
\draw[color=black] (0.7,1.3) node {$u(\cdot \, ,-1+\epsilon)$};
\draw[color=black] (-3.5,2.1) node {$u(\cdot,y_0)$};
\draw[color=red] (-2.5,2.4) node {$f^{\,0}_{\epsilon,\alpha}$};
\end{scriptsize}
\end{tikzpicture}
\caption{Level sets of the solution $h$}
\end{figure}
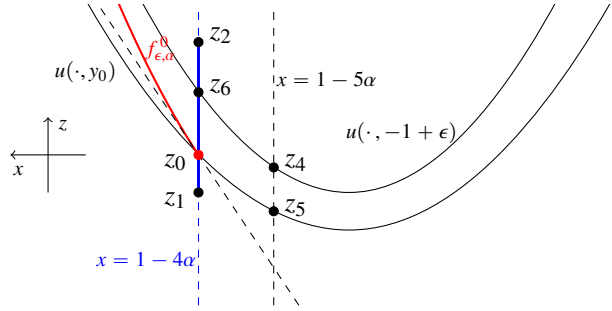

\noindent \textit{Step 4 : Partial derivative $\bd_x u$ bound at the contact point $P_0$.} We can consider the level set $L_{y_0}(\varphi_{\epsilon,\alpha})$ as the graph of a convex function $ f^{\, 0}_{\epsilon,\alpha}:[1-4\alpha,1-\alpha) \to \mathbb{R}$, namely $L_{y_0}(\varphi_{\epsilon,\alpha})=\{(x,f^{\, 0}_{\epsilon,\alpha}(x)):x \in [1-4\alpha,1-\alpha)\}$. Then, by the definitions  of $\varphi_{\alpha}$ and $(\bar x_0 ,\bar z_0)$, we have $(f^{\, 0}_{\epsilon,\alpha})'(x_0)=f'_{\epsilon,\alpha}(\bar x_0)$. Thus, \eqref{eq:PGE f'(x_2)} yields the bound
\begin{align*}
(f^{\, 0}_{\epsilon,\alpha})'(x_0) \leq \cot (4\pi\alpha^2 / \cR) \,.
\end{align*}

On the other hand,  \eqref{eq:PGE Contact point} implies $L_{y_0}(\varphi_{\epsilon,\alpha}) \prec L_{y_0}(h)$, namely $u(x,y_0) \leq f^{\, 0}_{\epsilon,\alpha}(x)  $ holds for all $x \in [1-4\alpha,1-\alpha)$. Therefore,
\begin{align*}\label{eq:PGE u_x bound at P_0}
\bd_x u(x_0,y_0)\leq (f^{\, 0}_{\epsilon,\alpha})'(x_0) \leq \cot (4\pi\alpha^2 / \cR) . \tag{4.17}
\end{align*}

\medskip

\noindent \textit{Step 5 : Partial derivative $\bd_x u$ bound at the given point.} 
We define the  points $P_3,P_4,P_5$ on $\Sigma$ by 
\begin{align*}
P_3  & =(x_3,y_3,z_3)\eqqcolon (1-5\alpha,y_0,u(1-5\alpha,y_0)) , \\
P_4  & =(x_4,y_4,z_4)\eqqcolon (1-5\alpha,-1+\epsilon,u(1-5\alpha,-1+\epsilon))  ,\\
P_5  & =(x_5,y_5,z_5)\eqqcolon (1-4\alpha,-1+\epsilon,u(1-4\alpha,-1+\epsilon)).
\end{align*} 
Since we know $P_0,P_3 \in L^y_{y_0}(\Sigma)$, the inequality \eqref{eq:PGE u_x bound at P_0} and the convexity of $u$ give
\begin{align*}
z_0-z_3 = \int^{x_0}_{x_3} \bd_x u(x,y_0)dx \leq \int^{x_0}_{x_3} \bd_x u(x_0,y_0)dx =\alpha \big( \bd_x u(x_0,y_0) \big) \leq \alpha \cot \frac{4\pi\alpha^2}{\cR} \,.
\end{align*}

By adding \eqref{eq:PGE z_2-z_0} and the inequality above, we obtain
\begin{align*}\label{eq:PGE z_2-z_3}
z_2-z_3 \leq  2 \alpha +3 \alpha \cot (4\pi\alpha^2 /\cR). \tag{4.18}
\end{align*}

On the other hand, \eqref{eq:PGE Contact point} implies that $L_{-1+\epsilon}(\varphi_{\epsilon,\alpha}) \prec L_{-1+\epsilon}(h)$. Therefore, $x_2=x_5=1-4\alpha$, $(x_2,z_2) \in L_{-1+\epsilon}(\varphi_{\epsilon,\alpha})$, and $(x_5,z_5) \in L_{-1+\epsilon}(h)$ guarantee
\begin{align*}
z_2=\varphi_{\epsilon,\alpha}(x_2) \geq u(x_2,-1+\epsilon)=u(x_5,-1+\epsilon)=z_5 .
\end{align*}
Also, the convexity and symmetry of $\Sigma$ give that 
\begin{align*}
z_3 \leq z_4
\end{align*}
Thus, subtracting the inequalities above yields $z_5-z_4 \leq z_2-z_3$. Applying \eqref{eq:PGE z_2-z_3} , we have
\begin{align*}
z_5-z_4 \leq 2\alpha+3\alpha \cot (4\pi\alpha^2 /\cR).
\end{align*}

Hence,  the desired result follows by the following computation
\begin{align*}
z_5-z_4 = \int^{x_5}_{x_4} \bd_x u(x,-1+\epsilon) dx \geq  \int^{x_5}_{x_4} \bd_x u(x_4,-1+\epsilon) dx = \alpha  \big(\bd_x u(1-5\alpha,-1+\epsilon) \Big).
\end{align*}

\end{proof}

\section{Distance from the tip to flat sides}

Let $\Sigma$ be the  translating  solution to  the Gauss curvature flow over the square $\Omega$ as   in 
Theorem \ref{thm:INT Flat sides}.  In this final section we will show that this solution has flat sides, as stated in 
Theorem \ref{thm:DTF Distance} below. To this end, we will study  the distance from the tip of a solution $\Sigma$ to each point 
on the {\em free boundary},  that is the boundary of the flat sides. To estimate this distance one needs  to establish  a {\em gradient bound} for solutions to  the equation 
\eqref{eq:INT GCFeq} at {\em a certain point} near the flat sides. Since the gradient bound depends on the global structure
of  $\Omega$, we will establish an integral estimate by deriving 
a  separation of variables structure from \eqref{eq:INT GCFeq} as in  the proof of the following Lemma.

\begin{lemma}[Gradient bound]\label{lemma:DTF Gradient estimate}
Let a domain $\Omega$ and a solution $u$ satisfy the conditions in \emph{Theorem \ref{thm:PGE Gradient bound for level set} }. Assume that  $[a,b] \times [-1,-1+\sigma] \subset (-1,1) \times [-1,-\frac{1}{2})$, 
for some  constants $a,b,\sigma$. Then, there exists a point $x_0 \in [a,b]$ satisfying
\begin{align*}
-\bd_y u (x_0,-1+\sigma) \leq  \sqrt{\frac{M\cR}{2\pi\sigma(b-a)}} \,,
\end{align*} 
where $\displaystyle  M=\sup_{y \in (0,\sigma)}\sup_{x\in (a,b)}|\bd_x u| (x,-1+y)$.
\end{lemma}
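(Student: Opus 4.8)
The plan is to exploit the separation-of-variables inequality
\begin{align*}
\frac{u_{xx}\, u_{yy}}{(1+u_y^2)^{3/2}} \geq \frac{\det D^2 u}{(1+|Du|^2)^{3/2}} = \frac{2\pi}{\cA(\Omega)},
\end{align*}
which holds because $\det D^2 u = u_{xx}u_{yy} - u_{xy}^2 \leq u_{xx}u_{yy}$ and $1+|Du|^2 \geq 1+u_y^2$, both using convexity of $u$. Since $\Omega \subset [-4/3,4/3]^2$ we have $\cA(\Omega) \leq (8/3)^2 < 8$, hence the right-hand side is at least $\pi/4$; more simply I will just keep the constant $2\pi/\cA(\Omega) \geq \pi/4$. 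The rough idea: integrate this inequality over the rectangle $[a,b]\times[-1,-1+\sigma]$, control the $x$-integral of $u_{xx}$ by the prescribed bound $M$ on $\bd_x u(b,\cdot)$ (and the sign of $\bd_x u$ near the left edge), and control the $y$-integral of $u_{yy}/(1+u_y^2)^{3/2}$ by $\arctan$ of $\bd_y u$, which is bounded by $\pi/2$ regardless. Then a mean-value/pigeonhole argument produces the desired point $x_0$.

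Here is the sequence of steps I would carry out. First, for fixed $x \in [a,b]$, integrate in $y$ over $[-1,-1+\sigma]$:
\begin{align*}
\int_{-1}^{-1+\sigma} \frac{u_{yy}(x,y)}{(1+u_y(x,y)^2)^{3/2}}\,dy = g(u_y(x,-1+\sigma)) - g(u_y(x,-1)),
\end{align*}
where $g(p) = p/\sqrt{1+p^2}$ is increasing with range $(-1,1)$. Since $g(u_y(x,-1)) = -1$ (because $u_y \to -\infty$ as $y \to -1$, by the boundary condition in \eqref{eq:INT GCFeq}) — or, to avoid the boundary subtlety, simply bound $g(u_y(x,-1)) \geq -1$ — we get $\int_{-1}^{-1+\sigma} u_{yy}(1+u_y^2)^{-3/2}\,dy \leq 1 + g(u_y(x,-1+\sigma))$. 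Combining with the separation inequality, for each $x$,
\begin{align*}
\frac{\pi}{4}\,\sigma \leq \frac{2\pi}{\cA(\Omega)}\,\sigma \leq \int_{-1}^{-1+\sigma} u_{xx}(x,y)\,\big(1 + g(u_y(x,-1+\sigma))\big)\,dy,
\end{align*}
wait — more carefully, since $u_{xx}$ may depend on $y$, I should instead first integrate the separated inequality in $y$ keeping $u_{xx}(x,y)$ inside. A cleaner route: bound $u_{yy}(1+u_y^2)^{-3/2} \leq u_{yy}(1+u_y(x,-1+\sigma)^2)^{-3/2}$ is false in general. So the correct order is: from $u_{xx}(x,y)\,u_{yy}(x,y) \geq \frac{2\pi}{\cA}(1+u_y^2)^{3/2} \geq \frac{\pi}{4}$, divide and integrate. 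Actually the slick way: for each fixed $x$, write $u_{xx}(x,y) \geq \frac{\pi}{4}(1+u_y^2)^{3/2}/u_{yy}$... this still couples. I will instead integrate in $x$ first.

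So, second and better: for fixed $y \in (-1,-1+\sigma)$, integrate the inequality $u_{xx}u_{yy} \geq \frac{\pi}{4}(1+u_y^2)^{3/2} \cdot \frac{(1+u_y^2)^{0}}{1}$... let me just integrate $\frac{u_{xx}u_{yy}}{(1+u_y^2)^{3/2}} \geq \frac{\pi}{4}$ over the full rectangle $[a,b]\times[-1+\sigma',-1+\sigma]$ and take $\sigma' \to 0$, using Tonelli. On the left, integrate in $y$ first (holding $x$): since $u_{xx}(x,y) \leq \bd_x u$ differences are not available pointwise in $y$, I instead integrate in $x$ first. For fixed $y$: $\int_a^b u_{xx}(x,y)\,dx = \bd_x u(b,y) - \bd_x u(a,y) \leq \bd_x u(b,y) \leq M$, using that $\bd_x u(a,y) \geq 0$ for $y$ near $-1$ (which follows since $\Omega$ is symmetric about $x=0$, so $\bd_x u = 0$ on $\{x=0\}$ and $\bd_x u \geq 0$ for $x \geq 0 \geq$... since $a \in (0,1)$, and $u$ is convex, $\bd_x u(a,y) \geq \bd_x u(0,y) = 0$). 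Hence $\int_a^b \frac{u_{xx}u_{yy}}{(1+u_y^2)^{3/2}}\,dx \leq \frac{M\,u_{yy}(x,y)}{(1+u_y^2)^{3/2}}$ — no, again $u_{yy}$ and the $u_y$ factors depend on $x$. The genuine fix is the pigeonhole at the start: \emph{choose} $x_0 \in [a,b]$ so that $\int_{-1+\sigma'}^{-1+\sigma} u_{xx}(x_0,y)\,dy = \frac{1}{b-a}\int_a^b\!\!\int_{-1+\sigma'}^{-1+\sigma} u_{xx}\,dy\,dx$ (possible by the intermediate value theorem applied to $x \mapsto \int u_{xx}(x,\cdot)\,dy$, which is continuous). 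The double integral equals $\int_{-1+\sigma'}^{-1+\sigma}(\bd_x u(b,y) - \bd_x u(a,y))\,dy \leq M\sigma$, so $\int_{-1+\sigma'}^{-1+\sigma} u_{xx}(x_0,y)\,dy \leq \frac{M\sigma}{b-a}$. Third, at this $x_0$, the separated inequality gives, for $y$ in that range, $u_{yy}(x_0,y) \geq \frac{\pi}{4}\,\frac{(1+u_y(x_0,y)^2)^{3/2}}{u_{xx}(x_0,y)}$, so dividing,
\begin{align*}
\frac{\pi}{4}\int_{-1+\sigma'}^{-1+\sigma} \frac{dy}{u_{xx}(x_0,y)} \leq \int_{-1+\sigma'}^{-1+\sigma} \frac{u_{yy}(x_0,y)}{(1+u_y(x_0,y)^2)^{3/2}}\,dy = g(u_y(x_0,-1+\sigma)) - g(u_y(x_0,-1+\sigma'))
\end{align*}
and the latter is $\leq 1 + g(u_y(x_0,-1+\sigma))$ as $\sigma'\to 0$. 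Fourth, by Cauchy–Schwarz, $\sigma^2 = \big(\int_{-1}^{-1+\sigma} 1\,dy\big)^2 \leq \int_{-1}^{-1+\sigma} u_{xx}(x_0,y)\,dy \cdot \int_{-1}^{-1+\sigma} \frac{dy}{u_{xx}(x_0,y)} \leq \frac{M\sigma}{b-a}\cdot\frac{4}{\pi}\big(1+g(u_y(x_0,-1+\sigma))\big)$, hence $1 + g(u_y(x_0,-1+\sigma)) \geq \frac{\pi\sigma(b-a)}{4M}$. Finally, since $g(p) = p/\sqrt{1+p^2}$ and $-\bd_y u(x_0,-1+\sigma) = -u_y(x_0,-1+\sigma) =: q > 0$, we have $1 + g(-q) = 1 - q/\sqrt{1+q^2} \leq 1/(2q^2)$ — indeed $1 - \frac{q}{\sqrt{1+q^2}} = \frac{1}{\sqrt{1+q^2}(\sqrt{1+q^2}+q)} \leq \frac{1}{2q^2}$ — so $\frac{1}{2q^2} \geq \frac{\pi\sigma(b-a)}{4M}$, i.e. $q \leq \sqrt{\frac{2M}{\pi\sigma(b-a)}}$, which is exactly the claimed bound. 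The main obstacle is precisely the bookkeeping above: getting the order of integration and the pigeonhole choice of $x_0$ right so that $u_{xx}$ is controlled on the correct slice, handling the $u_y \to -\infty$ boundary behaviour at $\{y=-1\}$ by a limiting argument (which is harmless since $g$ is bounded), and verifying the sign $\bd_x u(a,y) \geq 0$ from the axial symmetry — these are routine but must be assembled carefully. I also used only $\cA(\Omega) \leq 8$; replacing $\pi/4$ by $2\pi/\cA(\Omega)$ throughout would give a slightly sharper constant, but $\pi/4$ suffices for the stated inequality after noting $2\pi/\cA(\Omega) \geq \pi/4$.
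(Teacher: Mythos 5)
Your final argument is correct and rests on exactly the same ingredients as the paper's proof: the separated inequality $u_{xx}u_{yy}(1+u_y^2)^{-3/2}\geq \pi/4$, the bound $\int_a^b u_{xx}\,dx\leq M$ coming from $\bd_x u(a,\cdot)\geq 0$ (symmetry plus convexity), a pigeonhole choice of $x_0$, the antiderivative $u_y(1+u_y^2)^{-1/2}$, and the same closing algebra. The only difference is the (cosmetic) order of operations — the paper applies Cauchy--Schwarz in the $x$-direction to $\sqrt{u_{xx}}$ and $\sqrt{u_{yy}(1+u_y^2)^{-3/2}}$ and then pigeonholes on the $y$-integral of $u_{yy}(1+u_y^2)^{-3/2}$, whereas you pigeonhole on the $y$-integral of $u_{xx}$ first and then apply Cauchy--Schwarz in $y$ on the slice $x=x_0$ — and both routes land on the identical intermediate inequality $1+u_y(1+u_y^2)^{-1/2}(x_0,-1+\sigma)\geq \pi\sigma(b-a)/(4M)$.
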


\begin{proof}
Since we have $\cA(\Omega) <\cR$, the following   inequality holds
\begin{align*}
\frac{u_{yy}u_{xx}}{(1+u_y^2)^{\frac{3}{2}}}  \geq \frac{u_{yy}u_{xx}-u_{xy}^2}{(1+u_x^2+u_y^2)^{\frac{3}{2}}} = \frac{ \det D^2u}{(1+|Du|^2)^{\frac{3}{2}}} = \frac{2\pi}{\cA}> \frac{2\pi}{\cR} \, .
\end{align*}
Combining the above inequality with Holder inequality yields
\begin{align*}
\Big(\int^b_a \frac{u_{yy}}{(1+u_y^2)^{\frac{3}{2}}} dx \Big) \Big( \int^b_a u_{xx}\, dx \Big) > \Big( \int^b_a (2\pi/\cR)^{\frac{1}{2}} \; dx\Big)^2=\frac{2\pi}{\cR}(a-b)^2.
\end{align*}
On the other hand, for $y \in (0,\sigma)$ the following holds
\begin{align*}
\int^b_a u_{xx}(\cdot,-1+y)\, dx = u_x(b,-1+y)-u_x(a,-1+y)  \leq 2M.
\end{align*}
Hence,
\begin{align*}
\frac{1}{b-a}\int^b_a \int_{-1}^{-1+\sigma} \frac{u_{yy}}{(1+u_y^2)^{\frac{3}{2}}} dydx \geq \frac{\pi \sigma}{M\cR}(b-a).
\end{align*}
Therefore,  there exists a constant $x_0 \in [a,b]$ satisfying
\begin{align*}
\int_{-1}^{-1+\sigma} \frac{u_{yy}}{(1+u_y^2)^{\frac{3}{2}}}(x_0,\cdot\,) dy \geq \frac{\pi \sigma}{M\cR}(b-a).
\end{align*}
Moreover, by $|u_{y}| \leq (1+u_y^2)^{\frac{1}{2}}$, we have 
\begin{align*}
\int_{-1}^{-1+\sigma} \frac{u_{yy}}{(1+u_y^2)^{\frac{3}{2}}}(x_0,\cdot\,) \, dy = \frac{u_{y}}{(1+u_y^2)^{\frac{1}{2}}}(x_0,\cdot\,) \Bigg |^{-1+\sigma}_{-1}\leq  \frac{u_{y}}{(1+u_y^2)^{\frac{1}{2}}}(x_0,-1+\sigma)+1.
\end{align*}
Hence, at $(x_0,-1+\sigma)$, the following holds
\begin{align*}
\frac{|u_{y}|}{(1+u_y^2)^{\frac{1}{2}}}(x_0,-1+\sigma)=\frac{-u_{y}}{(1+u_y^2)^{\frac{1}{2}}}(x_0,-1+\sigma)\leq 1-\frac{\pi \sigma}{M\cR}(b-a).
\end{align*}
We may assume $u_y \neq 0$ and take the reciprocal of the above inequality.
\begin{align*}
1+u_y^{-2}(x_0,-1+\sigma) \geq \big(1-\frac{\pi \sigma}{M\cR}(b-a)\big)^{-2} \geq \big(1+\frac{\pi \sigma}{M\cR}(b-a)\big)^{2} \geq 1+\frac{2\pi \sigma}{M\cR}(b-a) 
\end{align*}
which implies to the desired result. 
\end{proof}

The distance between the tip of the translating solution $\Sigma$ over the square and its flat sides is estimated in the
following result.

\begin{theorem}[Distance between the tip and flat sides]\label{thm:DTF Distance}
Let $\Omega$ and $u$ satisfy the conditions in  \emph{Theorem  \ref{thm:PGE Gradient bound for level set}}. Given $\alpha \in (0,1/6)$, the following holds 
\begin{equation*}
\inf_{5\alpha\leq x\leq 6\alpha}u(1-x,-1)-\inf_{\Omega} u \leq  (4+\cR\alpha^{-2})\text{diam}(\Omega) .
\end{equation*}
\end{theorem}
 
 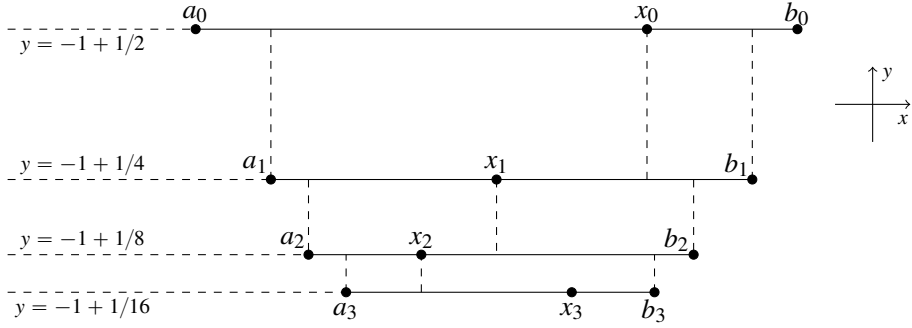
\begin{figure}[h]
\begin{tikzpicture}\label{fig:DTF Convering Points}
[x=1cm,y=1cm] \clip(-5.5,0) rectangle (6.5,4.5);
\draw (-3,4) -- (5,4);
\draw[dashed] (-5.5,4) -- (-3,4);
\draw (-2,2) -- (4.4,2);
\draw[dashed] (-5.5,2) -- (-2,2);
\draw (-1.5,1) -- (3.62,1);
\draw[dashed] (-5.5,1) -- (-1.5,1);
\draw[dashed] (-2,4) -- (-2,2);
\draw[dashed] (4.4,4) -- (4.4,2);
\draw[dashed] (3,4) -- (3,2);
\draw[dashed] (-1.5,2) -- (-1.5,1);
\draw[dashed] (3.62,2) -- (3.62,1);
\draw[dashed] (1,2) -- (1,1);
\draw (-1,0.5) -- (3.1,0.5);
\draw[dashed] (-5.5,0.5) -- (-1,0.5);
\draw[dashed] (-1,0.5) -- (-1,1);
\draw[dashed] (3.1,0.5) -- (3.1,1);
\draw[dashed] (0,0.5) -- (0,1);
\draw[->] (5.5,3) -- (6.5,3);
\draw[->] (6,2.5) -- (6,3.5);
\draw[fill=black] (-3,4) circle (0.06cm);
\draw[fill=black] (5,4) circle (0.06cm);
\draw[fill=black] (-2,2) circle (0.06cm);
\draw[fill=black] (4.4,2) circle (0.06cm);
\draw[fill=black] (-1.5,1) circle (0.06cm);
\draw[fill=black] (3.62,1) circle (0.06cm);
\draw[fill=black] (-1,0.5) circle (0.06cm);
\draw[fill=black] (3.1,0.5) circle (0.06cm);
\draw[fill=black] (3,4) circle (0.06cm);
\draw[fill=black] (1,2) circle (0.06cm);
\draw[fill=black] (0,1) circle (0.06cm);
\draw[fill=black] (2,0.5) circle (0.06cm);
\begin{scriptsize}
\draw[color=black] (-4.5,3.8) node {$y=-1+1/2$};
\draw[color=black] (-4.5,2.2) node {$y=-1+1/4$};
\draw[color=black] (-4.5,1.2) node {$y=-1+1/8$};
\draw[color=black] (-4.5,0.3) node {$y=-1+1/16$};
\draw[color=black] (6.4,2.8) node {$x$};
\draw[color=black] (6.2,3.4) node {$y$};
\draw[color=black] (-3,4.2) node[scale=1.3] {$a_0$};
\draw[color=black] (5,4.2) node[scale=1.3] {$b_0$};
\draw[color=black] (3,4.2) node[scale=1.3] {$x_0$};
\draw[color=black] (1,2.2) node[scale=1.3] {$x_1$};
\draw[color=black] (-2.2,2.2) node[scale=1.3] {$a_1$};
\draw[color=black] (4.2,2.2) node[scale=1.3] {$b_1$};
\draw[color=black] (0,1.2) node[scale=1.3] {$x_2$};
\draw[color=black] (-1.7,1.2) node[scale=1.3] {$a_2$};
\draw[color=black] (3.4,1.2) node[scale=1.3] {$b_2$};
\draw[color=black] (2,0.25) node[scale=1.3] {$x_3$};
\draw[color=black] (-1,0.25) node[scale=1.3] {$a_3$};
\draw[color=black] (3.1,0.25) node[scale=1.3] {$b_3$};
\end{scriptsize}
\end{tikzpicture}
\caption{Converging points on domain $\Omega$}
\end{figure}
 
\begin{proof}
We begin by setting $a_0=1-6\alpha$, $b_0=1-5\alpha$, $\sigma_0=\frac{1}{2}$, and 
\[M=\sup_{y \in (0,\sigma_0)}\sup_{x \in [a_0,b_0]}|\bd_x u|(x,-1+y).\]
Then, by Lemma \ref{lemma:DTF Gradient estimate}, there exists a point $x_0 \in [a_0,b_0]$ satisfying
\begin{align*}\label{eq:DTF gradient bound at -1/2}
-\bd_y u(x_0,-1+1/2) \leq  2^{-\frac{1}{2}}(M\cR/ \pi \alpha)^{\frac{1}{2}}. \tag{5.1}
\end{align*} 
We choose an interval $[a_1,b_1]$ satisfying $x_0 \in [a_1,b_1] \subset [a_0,b_0]$ and $b_1-a_1=2^{-1/3}\alpha$. Then, for $\sigma_1 = 2^{-2}$, we have $\displaystyle  \sup_{y \in (0,\sigma_1)}\sup_{x\in [a_1,b_1]}|\bd_x u|(x,-1+y) \leq M$. Hence, Lemma \ref{lemma:DTF Gradient estimate} gives a point $x_1 \in [a_1,b_1]$ satisfying
\begin{align*}
-\bd_y u(x_1,-1+1/2^2) \leq  2^{-\frac{1}{2}+\frac{2}{3}}( M\cR/ \pi \alpha)^{\frac{1}{2}}.
\end{align*}
By setting $\sigma_n=2^{-1-n}$, we can inductively choose intervals $[a_n,b_n]$ satisfying $x_{n-1}\in [a_n,b_n]\subset [a_{n-1},b_{n-1}]$ and $b_n-a_n= 2^{-n/3}\alpha$ so that we obtain a point $x_n \in [a_n,b_n]$ satisfying 
\begin{align*}
-\bd_y u(x_n,-1+1/2^{n+1}) \leq  2^{-\frac{1}{2}+\frac{2}{3}n}( M\cR/ \pi \alpha)^{\frac{1}{2}}.
\end{align*}
Then, integrating along $y$ yields
\begin{align*}
\bigg| u(x_n,-1+\frac{1}{2^{n+1}})-u(x_n,-1+\frac{1}{2^n})\bigg| \leq \int^{-1+1/2^n}_{-1+1/2^{n+1}} -\bd_y u(x_n,y)dy \leq  2^{-\frac{3}{2}-\frac{1}{3}n}(M\cR/ \pi \alpha)^{\frac{1}{2}}.
\end{align*}
On the other hand, $x_{n-1},x_n \in [a_n,b_n]$ and $b_n-a_n=2^{-n/3}\alpha$ imply
\begin{align*}
\bigg| u(x_n,-1+\frac{1}{2^n})-u(x_{n-1},-1+\frac{1}{2^n}) \bigg|=\bigg| \int^{x_n}_{x_{n-1}}\bd_x u(x,-1+\frac{1}{2^n}) dx\bigg|
 \leq 2^{-\frac{1}{3}n}M\alpha.
\end{align*}
Therefore, 
\begin{align*}
\bigg| u(x_n,-1+\frac{1}{2^{n+1}})-u(x_{n-1},-1+\frac{1}{2^n})\bigg| \leq  2^{-\frac n3}\big(( M\cR/ 8\pi \alpha)^{\frac{1}{2}}+M\alpha\big).
\end{align*}
By definition of $x_n$, the sequence $\{x_n\}_{n \in \mathbb{N}}$ converges to a point $\bar x \in [a_0,b_0]$. Hence, we can sum up the inequality above for all $n\in \mathbb{N}$ so that we have
\begin{align*}\label{eq:DTF distant up to flat side}
u(\bar x , -1) -u(x_0,-\frac{1}{2})\leq \sum_{n=1}^{\infty}2^{-\frac{n}{3}}\big(( M\cR/ 8\pi \alpha)^{\frac{1}{2}}+M\alpha\big)\leq 4 \big(( M\cR/ 8\pi \alpha)^{\frac{1}{2}}+M\alpha\big). \tag{5.2}
\end{align*}
Next, we consider the  linear function
$$f(x,y)=\big(\bd_x u(x_0,-1/2)\big)\, (x-x_0) +\big(\bd_y u(x_0,-1/2)\big)\, (y+1/2)  + u(x_0,-1/2)$$ whose graph is the tangent hyperplane of $\Sigma$ at $(x_0,-1/2, u(x_0,-1/2))$. Then, the convexity of $\Sigma$ gives $f(x,y)\leq u(x,y)$ in $\Omega$. Hence, \eqref{eq:DTF gradient bound at -1/2} and definition of $M$ prove the following for $(x,y)\in \Omega$
\begin{align*}
u(x_0,-\frac{1}{2}) -u(x,y) \leq f(x_0,-\frac{1}{2}) -f(x,y)\leq \text{diam}(\Omega) \big(M+( M\cR/ 2\pi \alpha)^{\frac{1}{2}}\big).
\end{align*}
Thus, \eqref{eq:DTF distant up to flat side}, $\alpha \leq 1/6$, $\cR > \cA(\Omega) \geq 2$, and the inequality give
\begin{align*}
u(\bar x , -1) -\inf_{\Omega}u \leq 2\text{diam}(\Omega)\big(M+( M\cR/ 2\pi \alpha)^{\frac{1}{2}}\big) .
\end{align*}
Moreover,   Theorem \ref{thm:PGE Gradient bound for level set} shows $$u_x \leq 2+3\cot(4\pi\alpha^2/\cR)$$ in $[a_0,b_0]\times (-1,-1+\sigma_0)$.
 In addition,  $b_0=1-6\alpha \geq 0 $ and the convexity of $u$ imply $u_x(x,y)\geq u_x(0,y)$ for $x \geq b_0$. Thus, Theorem \ref{thm:PGE Gradient bound for level set} shows again $$-u_x \leq 2+3\cot (4\pi/25\cR)\leq 2+3\cot(4\pi \alpha^2/\cR)$$ in $[a_0,b_0]\times (-1,-1+\sigma_0)$. Namely,
\begin{align*}
M \leq 2+3\cot(4\pi\alpha^2/\cR)\leq 2+\frac{3\cR}{4\pi \alpha^2}.
\end{align*}
Hence,
\begin{align*}
\frac{u(\bar x , -1) -\inf_{\Omega}u}{\text{diam}(\Omega)}  \leq 4+\frac{3\cR}{2\pi \alpha^2}+2\Big(\frac{\cR}{\pi\alpha}+\frac{3\cR^2}{8\pi^2\alpha^3}\Big)^{\frac{1}{2}}\leq 4+\frac{\cR}{\alpha^2}.
\end{align*}
Thus, $\bar x  \in [a_0,b_0]$ implies the desired result.
\end{proof}

We will now conclude  the proof of  the main Theorem \ref{thm:INT Flat sides}. This readily  follows  from the following result. 

\begin{theorem}[Existence of flat sides]\label{thm:DTF main thm}
Let $\Omega$ be a convex open bounded domain in $\mathbb{R}^2$, and let $u$ be a solution to \eqref{eq:INT GCFeq} on $\Omega$. Then, the corresponding solution $\Sigma$ to \eqref{eq:INT GCF} is  class $C^{1,1}_{\text{loc}}$.

 Suppose that the boundary $\bd \Omega$ contains a line segment $L=\{tp+(1-t)q:p,q \in \bd \Omega , t\in (0,1)\}$. Then, there exists a function $\bar u:L \to \mathbb{R}$ such that
\begin{equation*}
\bar u(\vec{x}_0)= \lim_{\vec{x} \to \vec{x}_0}u(\vec{x}).
\end{equation*}
\end{theorem}

\begin{proof}
Let $\{\Omega_n\}_{n \in \mathbb{N}}$ be a sequence of bounded convex open sets $\mathbb{R}^2$ such that 
\begin{itemize}
\item $\{\Omega_n\}$ monotonically decreases to $\Omega$, namely $\Omega_{n+1}\subset \Omega_n$ and $\Omega_n \to \Omega$,
\item each boundary $\bd \Omega_n$ is a strictly convex and smooth hypersurface in $\mathbb{R}^n$.
\end{itemize}
Then, we let $\{u_n\}_{n \in \mathbb{N}}$ and $\{\Sigma_n \}_{n \in \mathbb{N}}$ be the sequence of corresponding solutions of \eqref{eq:INT GCFeq} with $\inf u_n=0$ and their graphs, respectively. We denote the convex hull of $\Sigma_n$ by 
$E_n=\{tX+(1-t)Y: X,Y \in \Sigma_n, t \in [0,1]\}$, and define a convex body $E$ by
\begin{align*}
E= \bigcap_{n \in \mathbb{N}} E_n .
\end{align*} 
Let us show that the boundary $\bd E$ is a graph over $\Omega$. Given a point $\vec{x}_0\in \Omega$ and a direction $e \in \mathbb{R}^2$, we denote by $\Omega^{\vec{x}_0,e}$ the subset set $\{\vec{x}\in\Omega:\langle \vec{x}-\vec{x}_0,e \rangle \geq 0\}$, and denote $\cA^{\vec{x}_0}_{\inf}(\Omega)=\inf \{\cA(\Omega^{\vec{x}_0,e}): e \in \mathbb{R}^2,|e|=1\}$. Then, $\cA^{\vec{x}_0}_{\inf}(\Omega)$ is a uniform lower bound for the area of $\Omega_n^{\vec{x}_0}$ in Lemma \ref{lemmma: OP Height bound by level set}. Moreover, $\cA(\Omega_1)$ and $\text{diam}(\Omega_1)$ are uniform upper bounds for $\cA(\Omega_n)$ and $\text{diam}(\Omega_n)$, respectively. Hence, Lemma \ref{lemmma: OP Height bound by level set} and $\inf u_n=0$ give a uniform upper bound $U(\vec{x}_0)$ for $u_n(\vec{x}_0)$, namely $(\vec{x}_0,U(\vec{x}_0)) \in E$. Therefore, $\bd E$ is a graph on $\Omega$.  Since Theorem \ref{thm:OP Optimal regularity} gives the uniform curvature estimates for $\Sigma_n$, the limit $\Sigma=\bd E$ is a $C_{\text{loc}}^{1,1}$ solution to \eqref{eq:INT GCF} defined on $\Omega$.

\medskip

Next, we assume that $\bd\Omega$ contains a line segment $L=\{(x,y_0):a<x<b\}$. Given a point $x_0 \in [a_0,b_0] \subset (a,b)$, we define a sequence of numbers $\{y_n\}$ by $\langle Du_n(x_0,y_n),e_2\rangle=0 $. Then, Lemma \ref{lemmma: OP Height bound by level set} and Theorem \ref{thm:OP Graph in region} yield a uniform lower bound $r$ for $|y_n-\bar y_n^{\pm} |$ depending on $a_0,b_0$, where $(x_0,\bar y_n^{\pm}) \in \bd\Omega_n$. Hence, for sufficiently large $n$, we have $ \langle Du_n ,e_2\rangle \neq 0$ in $[a_0,b_0]\times [y_0,y_0+\frac{r}{2}]$. Namely, the surfaces $\Sigma_n$ are convex graphs with respect to $e_2$ in $[a_0,b_0]\times [y_0,y_0+\frac{r}{2}]\times \mathbb{R}$. 

Now, we may assume $y_0=-1, \frac{r}{2}\geq 1, [-1,1] \subset [a_0,b_0]$ by scaling and translating the solutions. Then, $\Sigma_n$ satisfies the conditions in Theorem \ref{thm:PGE Gradient bound for level set}. By Theorem \ref{thm:DTF Distance}, given $\alpha \in (0,1/6)$ there exists a sequence of point $\{x_n^+\}$ such that $x_n^+\in [1-6\alpha,1-5\alpha]$ and 
\begin{align*}
u_n(x_n^+,-1)\in (4+\cA(\Omega_1)\alpha^{-2})\text{diam}(\Omega_1)=C_\alpha.
\end{align*}
Hence, the limit $\bar x^+ \in [1-6\alpha,1-5\alpha]$ of a subsequence of $x_n^+$ satisfies $(x_n^+,-1,C_\alpha) \in E$. In the same manner, there exists a number $\bar x^- \in [-1+5\alpha,-1+6\alpha]$ such that $(x_n^-,-1,C_\alpha) \in E$. Then, the convexity of $E$ yields  $(x,-1,C_\alpha) \in E$ for all $x\in [-1+6\alpha,1-6\alpha] \subset [\bar x^-,\bar x^+]$, which implies the desired result.
\end{proof}

\centerline{\bf Acknowledgements}

\smallskip

\noindent K. Choi has been partially supported by NSF grant DMS-1811267.\\
\noindent P. Daskalopoulos has been partially supported by NSF grant DMS-1600658.\\
\noindent Ki-Ahm Lee Lee has been supported by the National Research Foundation
of Korea (NRF) grant funded by the Korean government (MSIP) (No. NRF2017R1A2A2A05001376)
Ki-Ahm Lee also holds  a joint appointment with the Research Institute of Mathematics of Seoul National University.

\end{document}